\documentclass[12pt, reqno]{amsart}

\author[S.~Cerreia-Vioglio]{Simone Cerreia-Vioglio}
\address{Universit\'a ``Luigi Bocconi''\\Department of Decision Sciences\\Milan, Italy}
\email{simone.cerreia@unibocconi.it}

\author[P.~Leonetti]{Paolo Leonetti}
\address{Universit\'a degli Studi dell'Insubria, Department of Economics, via Monte Generoso 71, 21100 Varese, Italy}
\email{leonetti.paolo@gmail.com}

\author[F.~Maccheroni]{Fabio Maccheroni}
\address{Universit\'a ``Luigi Bocconi''\\Department of Decision Sciences\\Milan, Italy}
\email{fabio.maccheroni@unibocconi.it}

\author[M.~Marinacci]{Massimo Marinacci}
\address{Universit\'a ``Luigi Bocconi''\\Department of Decision Sciences\\Milan, Italy}
\email{massimo.marinacci@unibocconi.it}

\keywords{Archimedean Riesz space, Stone--\u{C}ech compactification, Choquet averages, normalized capacities, submeasures, ideal convergence}
\subjclass[2010]{Primary: 28A12, 40A35, 46A40. Secondary: 28A25, 46B45, 54D35.}

\title{Capacities and Choquet Averages of Ultrafilters}

\usepackage{amsmath}
\usepackage{amssymb}
\usepackage{amsthm}
\usepackage[left=3.5cm, right=3.5cm, paperheight=11.8in]{geometry}
\usepackage{hyperref}
\usepackage{fancyhdr}
\usepackage{enumitem}
\usepackage{bm}
\usepackage{comment}
\usepackage{nicefrac}
\usepackage{mathrsfs}
\usepackage{graphicx}
\usepackage[utf8]{inputenc}

\AtBeginDocument{%
   \def\MR#1{}
}

\newtheorem{thm}{Theorem}[section]
\newtheorem{cor}[thm]{Corollary}
\newtheorem{lem}[thm]{Lemma}
\newtheorem{prop}[thm]{Proposition}

\theoremstyle{definition} 
\let\olddefi\defi
\renewcommand{\defi}{\olddefi\normalfont}

\let\oldexample\example
\renewcommand{\example}{\oldexample\normalfont}
\newtheorem{rmk}[thm]{Remark}
\let\oldrmk\rmk
\renewcommand{\rmk}{\oldrmk\normalfont}

\theoremstyle{remark}
\newtheorem{claim}{\textsc{Claim}}
\newtheorem*{claim*}{\textsc{Claim}}


\pagestyle{fancy}
\fancyhf{}
\fancyhead[CO]{\textsc{Capacites and Choquet Averages of Ultrafilters}}
\fancyhead[CE]
{\textsc{Cerreia-Vioglio}, \textsc{Leonetti}, \textsc{Maccheroni} and \textsc{Marinacci}}
\fancyhead[RO,LE]{\thepage}

\setlength{\headheight}{12pt}

\hypersetup{
    pdftitle={Choquet Averages of Ultrafilters},
    pdfauthor={},
    pdfmenubar=false,
    pdffitwindow=true,
    pdfstartview=FitH,
    colorlinks=true,
    linkcolor=blue,
    citecolor=green,
    urlcolor=cyan
}

\uchyph=0

\providecommand{\MR}[1]{}

\providecommand{\MR}{\relax\ifhmode\unskip\space\fi MR }

\providecommand{\href}[2]{#2}
                                 
\begin{document}

\maketitle
\thispagestyle{empty}

\begin{abstract}
\noindent 
We show that a normalized capacity $\nu: \mathcal{P}(\mathbf{N})\to \mathbf{R}$ is invariant with respect to an ideal $\mathcal{I}$ on $\mathbf{N}$ if and only if it can be represented as a Choquet average of $\{0,1\}$-valued finitely additive probability measures corresponding to the ultrafilters containing the dual filter of $\mathcal{I}$. 
This is obtained as a consequence of an abstract analogue in the context of Archimedean Riesz spaces. 
\end{abstract}

\section{Introduction}\label{sec:intro}

In this paper, we study normalized capacities which could be defined on the quotient $\mathcal{P}(\mathbf{N})/\mathcal{I}$, where $\mathcal{I}$ is a given ideal on the positive integers $\mathbf{N}$. 
Our goal is to represent them as suitable averages of $\{0,1\}$-valued finitely additive probability measures corresponding to the ultrafilters containing the dual filter of $\mathcal{I}$. 
In what follows, we define the notions of normalized capacity, ideal, Choquet integral, and we introduce the topological requirements needed for our results. This will allow us to discuss them formally and prove them. 


\subsection*{Normalized capacities and ideals}  Given a measurable space $(S,\Sigma)$, where $\Sigma$ is a $\sigma$-algebra of subsets of $S$, a \emph{normalized capacity} $\nu: \Sigma \to \mathbf{R}$ is a monotone set function (i.e., $\nu(A)\le \nu(B)$ for all $A,B \in \Sigma$ with $A\subseteq B$) such that $\nu(\emptyset)=0$ and $\nu(S)=1$. 
A family $\mathcal{I}\subseteq \mathcal{P}(\mathbf{N})$ is an \emph{ideal} if it is closed under subsets,  finite unions, and $\mathbf{N}\notin \mathcal{I}$.  
Unless otherwise stated, we assume that 
$\mathcal{I}$ 
contains the family $\mathrm{Fin}$ of finite sets. 
Let 
$\mathcal{I}^\star:=\{A\subseteq \mathbf{N}: A^c \in \mathcal{I}\}$ be its dual filter. Note that $\mathcal{I}$ is maximal with respect to inclusion if and only if $\mathcal{I}^\star$ is a free ultrafilter on $\mathbf{N}$. 
Two important examples of ideals are: (i) the family of asymptotic density zero sets 
$$
\mathcal{Z}:=\{A\subseteq \mathbf{N}: \mathsf{d}^\star(A)=0\}, 
$$ 
where $\mathsf{d}^\star$ is the upper asymptotic density defined by 
$
\mathsf{d}^\star(A)=\limsup_{n}|A\cap [1,n]|/n
$ 
for all $A\subseteq \mathbf{N}$; (ii) the summable ideal 
$$
\mathcal{I}_{1/n}:=\left\{A\subseteq \mathbf{N}: \sum\nolimits_{a \in A}1/a<\infty\right\}.
$$
We refer to \cite{MR2777744} for a survey on ideals and associated filters. 
Finally, a normalized capacity $\nu: \mathcal{P}(\mathbf{N}) \to \mathbf{R}$ is said to be $\mathcal{I}$\emph{-invariant} provided that $\nu(A)=\nu(B)$ whenever the symmetric difference $A\bigtriangleup B$ belongs to the ideal $\mathcal{I}$.



\subsection*{Choquet integrals} 
The idea of normalized capacity naturally calls for the notion of (nonlinear) integral. 
Consider a bounded $\Sigma$-measurable function  $x: S\to \mathbf{R}$ and a normalized capacity $\nu: \Sigma \to \mathbf{R}$. The \emph{Choquet integral} of $x$ with respect to $\nu$ is the quantity 
$$
\int_S x\,\mathrm{d}\nu:= \int_0^\infty \nu(x\ge t)\,\mathrm{d}t+\int_{-\infty}^0 \left[\nu(x\ge t)-\nu(S)\right]\,\mathrm{d}t,
$$
where the integrals on the right hand side are meant to be improper Riemann integrals. 
This naturally generalizes the standard notion of integral since the two coincide when $\nu$ is finitely additive. From a functional point of view, the Choquet integral fails to be linear, but it is characterized by comonotonic additivity (see the seminal paper of Schmeidler \cite{MR835875}). 
In terms of definitions, note that, to compute the Choquet integral, one only needs a monotone set function defined over the upper level sets of the integrand and not necessarily over the entire $\sigma$-algebra $\Sigma$ (this is something we will exploit in our proofs). 
Nonadditive set functions and their integrals are widely used in applications: for applications in economics, see \cite{marinacci2004}; for applications in probability and statistics, see \cite{MR3503706, MR2135316, MR1145491}.

\subsection*{Topology} 
Given the set $\mathbf{N}$ of positive integers, 
we consider the Stone--\u{C}ech compactification $\beta\mathbf{N}$. 
Recall that $\beta\mathbf{N}$ is homeomorphic to the space of ultrafilters $\mathcal{F}$ on $\mathbf{N}$, which we still denote by $\beta\mathbf{N}$ and is topologised by the base of clopen subsets $\{\{\mathcal{F} \in \beta\mathbf{N}: A \in \mathcal{F}\}: A\subseteq \mathbf{N}\}$. 
By $\mathrm{Ult}(\mathcal{I})$ we denote the compact subspace of free ultrafilters  which contain the dual filter of a given ideal $\mathcal{I}$, that is, 
$$
\mathrm{Ult}(\mathcal{I}):=\{\mathcal{F} \in \beta\mathbf{N}: \mathcal{I}^\star \subseteq \mathcal{F}\}.
$$
We endow $\mathrm{Ult}(\mathcal{I})$ with its relative topology.  
Note that the subspace $\mathrm{Ult}(\mathcal{I})$, sometimes called \textquotedblleft support set,\textquotedblright
\hspace{.5mm}has been introduced in Henriksen \cite{MR108720} and further studied in \cite{MR271752, MR1095221, 
MR1372186, MR279609} in the context of ideals generated by nonnegative regular summability matrices,\footnote{However, it is known \cite[Proposition 13]{MR3405547} that, if we regard ideals as subsets of the Cantor space $\{0,1\}^{\mathbf{N}}$ endowed with the product topology, then the   latter sets are necessarily $F_{\sigma\delta}$ (hence, the notion of support set was applied only to a restricted class of ideals).} 
cf. also \cite{MR3405547, MR3883309}. 
Given an ultrafilter $\mathcal{F} \in \mathrm{Ult}(\mathcal{I})$, there is a corresponding $\{0,1\}$-valued finitely additive probability measure $\mu_{\mathcal{F}}$ defined by 
$$
\mu_{\mathcal{F}}(A)=1
\,\, \text{ if and only if }\,\,
A \in \mathcal{F}.
$$ 

Finally, given a topological space $X$, denote by $\mathscr{B}(X)$ its Borel $\sigma$-algebra (recall that if $Y\subseteq X$ is endowed with its relative topology then $\mathscr{B}(Y)=\{A\cap Y: A \in \mathscr{B}(X)\}$).

\medskip

\subsection*{Results}  We are now ready to state our main result. 

\begin{thm}
\label{thm:mainN} 
Let $\nu :\mathcal{P}(\mathbf{N})\rightarrow \mathbf{R}$
be a normalized capacity and $\mathcal{I}$ an ideal on $\mathbf{N}$. The
following statements are equivalent:
\begin{enumerate}[label={\rm (\roman{*})}]
\item \label{item:a1} $\nu$ is $\mathcal{I}$-invariant\textup{;}
\item \label{item:a2} There exists a normalized capacity $\rho: \mathscr{B}(\mathrm{Ult}(\mathcal{I}))\to \mathbf{R}$ such that 
$$
\forall x \in \ell_\infty, \quad 
\int_{\mathbf{N}} x \, \mathrm{d}\nu= \int_{\mathrm{Ult}(\mathcal{I})}\, \left(\int_{\mathbf{N}} x \, \mathrm{d}\mu_{\mathcal{F}}\right)\, \mathrm{d}\rho(\mathcal{F})\textup{;}
$$
\item  \label{item:a3} There exists a normalized capacity $\rho: \mathscr{B}(\mathrm{Ult}(\mathcal{I}))\to \mathbf{R}$ such that 
$$
\forall A\subseteq \mathbf{N}, \quad 
\nu(A)= \int_{\mathrm{Ult}(\mathcal{I})}\, \mu_{\mathcal{F}}(A)\, \mathrm{d}\rho(\mathcal{F})\textup{.}
$$
\end{enumerate}
\end{thm}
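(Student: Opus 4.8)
The plan is to establish the cycle (i) $\Rightarrow$ (ii) $\Rightarrow$ (iii) $\Rightarrow$ (i); the last two implications are short. For (ii) $\Rightarrow$ (iii), evaluate the identity in (ii) at $x=\mathbf{1}_A$ and use that the Choquet integral of an indicator returns the capacity of the set, so that $\int_{\mathbf{N}}\mathbf{1}_A\,\mathrm{d}\nu=\nu(A)$ and $\int_{\mathbf{N}}\mathbf{1}_A\,\mathrm{d}\mu_{\mathcal{F}}=\mu_{\mathcal{F}}(A)$. For (iii) $\Rightarrow$ (i), suppose $A\bigtriangleup B\in\mathcal{I}$ and fix $\mathcal{F}\in\mathrm{Ult}(\mathcal{I})$: since $(A\bigtriangleup B)^{c}\in\mathcal{I}^{\star}\subseteq\mathcal{F}$ and $A\cap(A\bigtriangleup B)^{c}=A\cap B=B\cap(A\bigtriangleup B)^{c}$, we get $A\in\mathcal{F}\iff A\cap B\in\mathcal{F}\iff B\in\mathcal{F}$, i.e.\ $\mu_{\mathcal{F}}(A)=\mu_{\mathcal{F}}(B)$ for every $\mathcal{F}\in\mathrm{Ult}(\mathcal{I})$; integrating against $\rho$ gives $\nu(A)=\nu(B)$.

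For the substantive implication (i) $\Rightarrow$ (ii) I would work with the identification $\ell_{\infty}=C(\beta\mathbf{N})$: each $x\in\ell_{\infty}$ has a unique continuous extension $\hat x\colon\beta\mathbf{N}\to\mathbf{R}$, and a direct computation with the two-valued measure $\mu_{\mathcal{F}}$ shows $\int_{\mathbf{N}}x\,\mathrm{d}\mu_{\mathcal{F}}=\lim_{\mathcal{F}}x=\hat x(\mathcal{F})$. Thus the inner integral in (ii) is the continuous function $\hat x|_{\mathrm{Ult}(\mathcal{I})}$, and it suffices to build a normalized capacity $\rho$ on $\mathscr{B}(\mathrm{Ult}(\mathcal{I}))$ with $\int_{\mathbf{N}}x\,\mathrm{d}\nu=\int_{\mathrm{Ult}(\mathcal{I})}\hat x\,\mathrm{d}\rho$ for all $x\in\ell_{\infty}$. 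A key auxiliary fact: since $\mathrm{Ult}(\mathcal{I})$ is a closed subspace of the zero-dimensional compact space $\beta\mathbf{N}$, its clopen subsets are exactly the sets $\widehat{A}\cap\mathrm{Ult}(\mathcal{I})$ with $A\subseteq\mathbf{N}$, where $\widehat{A}=\{\mathcal{F}\in\beta\mathbf{N}:A\in\mathcal{F}\}$, and $\widehat{A}\cap\mathrm{Ult}(\mathcal{I})\subseteq\widehat{B}\cap\mathrm{Ult}(\mathcal{I})$ iff $A\setminus B\in\mathcal{I}$ (``if'': then $(A\setminus B)^{c}\in\mathcal{I}^{\star}$; ``only if'': if $A\setminus B\notin\mathcal{I}$ then $\mathcal{I}^{\star}\cup\{A\setminus B\}$ has the finite intersection property, hence lies in some $\mathcal{F}\in\mathrm{Ult}(\mathcal{I})$).

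Next I would define $\rho_{0}$ on the algebra of clopen subsets of $\mathrm{Ult}(\mathcal{I})$ by $\rho_{0}(\widehat{A}\cap\mathrm{Ult}(\mathcal{I})):=\nu(A)$. The displayed characterization gives $\widehat{A}\cap\mathrm{Ult}(\mathcal{I})=\widehat{B}\cap\mathrm{Ult}(\mathcal{I})\iff A\bigtriangleup B\in\mathcal{I}$, so $\mathcal{I}$-invariance makes $\rho_{0}$ well defined; monotonicity of $\rho_{0}$ follows since $A\setminus B\in\mathcal{I}$ forces $\nu(A)=\nu(A\cap B)\le\nu(B)$, and $\rho_{0}(\emptyset)=0$, $\rho_{0}(\mathrm{Ult}(\mathcal{I}))=\nu(\mathbf{N})=1$. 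Then I would extend $\rho_{0}$ to all of $\mathscr{B}(\mathrm{Ult}(\mathcal{I}))$ by the outer rule $\rho(E):=\inf\{\rho_{0}(C):C\text{ clopen},\,E\subseteq C\}$; this is a normalized capacity on $\mathscr{B}(\mathrm{Ult}(\mathcal{I}))$ and, using monotonicity of $\rho_{0}$, it agrees with $\rho_{0}$ on clopen sets.

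The last step, and the main obstacle, is the Choquet identity: the level sets $\{\hat x\ge t\}$ of the continuous extension need not be clopen, so $\rho$ on them is not immediately given by $\nu$, and one must establish agreement for almost every $t$. Fix $x\in\ell_{\infty}$ and $t\in\mathbf{R}$ and put $A_{t}:=\widehat{\{x\ge t\}}\cap\mathrm{Ult}(\mathcal{I})$, a clopen set with $\rho(A_{t})=\nu(\{x\ge t\})$. Using $\hat x(\mathcal{F})=\lim_{\mathcal{F}}x$ one checks the sandwich $\{\mathcal{F}:\hat x(\mathcal{F})>t\}\subseteq A_{t}\subseteq\{\mathcal{F}:\hat x(\mathcal{F})\ge t\}$ inside $\mathrm{Ult}(\mathcal{I})$; hence for $s<t$ one has $\{\hat x\ge t\}\subseteq A_{s}$ together with $A_{t}\subseteq\{\hat x\ge t\}$, so monotonicity of $\rho$ gives $\nu(\{x\ge t\})=\rho(A_{t})\le\rho(\{\hat x\ge t\})\le\rho(A_{s})=\nu(\{x\ge s\})$ for every $s<t$. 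Since $t\mapsto\nu(\{x\ge t\})$ is non-increasing it is left-continuous off a countable set, where the squeeze yields $\rho(\{\hat x\ge t\})=\nu(\{x\ge t\})$; as the Choquet integrals are improper Riemann integrals of these monotone functions of $t$, it follows that $\int_{\mathrm{Ult}(\mathcal{I})}\hat x\,\mathrm{d}\rho=\int_{\mathbf{N}}x\,\mathrm{d}\nu$, which is (ii). (Equivalently, the argument above is the concrete image of a representation theorem for monotone, comonotonically additive, normalized functionals on Archimedean Riesz spaces with unit, applied to $\ell_{\infty}=C(\beta\mathbf{N})$; one may instead isolate that abstract statement and deduce Theorem~\ref{thm:mainN} from it.)
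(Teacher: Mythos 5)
Your proof is correct, but for the substantive implication \ref{item:a1} $\implies$ \ref{item:a2} it follows a genuinely different route from the paper. The paper lifts the problem to an abstract setting: it forms the quotient $\ell_\infty/N$ with $N=c_0(\mathcal{I})\cap\ell_\infty$, proves that the Choquet functional $V=\int_{\mathbf{N}}\cdot\,\mathrm{d}\nu$ is normalized, monotone, unit-additive, unit-modular and $N$-invariant (the $N$-invariance being the only delicate point, handled by an approximation through finitely-valued sequences), and then invokes a cited representation theorem for such functionals on Archimedean Riesz spaces to obtain a Choquet average over $\mathcal{E}_N=\mathrm{ext}(\Delta_N)$, which is finally identified with $\mathrm{Ult}(\mathcal{I})$ via the duality $\ell_\infty^{\star}\cong ba$. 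You instead work entirely on the Stone--\u{C}ech side: you transport $\nu$ directly to the clopen algebra of $\mathrm{Ult}(\mathcal{I})$ via $\rho_0(\widehat{A}\cap\mathrm{Ult}(\mathcal{I})):=\nu(A)$, using the correct dictionary $\widehat{A}\cap\mathrm{Ult}(\mathcal{I})\subseteq\widehat{B}\cap\mathrm{Ult}(\mathcal{I})\iff A\setminus B\in\mathcal{I}$ (this is exactly where $\mathcal{I}$-invariance enters, replacing the paper's Claims on $N$-invariance of $V$), extend by outer regularity, and verify the Choquet identity by sandwiching the closed level sets $\{\hat x\ge t\}$ between the clopen sets $A_t$ and $A_s$ for $s<t$ and using that a monotone function of $t$ has only countably many discontinuities. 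All steps check out, including the characterization of clopen subsets of $\mathrm{Ult}(\mathcal{I})$ by compactness and the identity $\int_{\mathbf{N}}x\,\mathrm{d}\mu_{\mathcal{F}}=\lim_{\mathcal{F}}x$. What your argument buys is a self-contained, elementary proof that needs neither the quotient construction nor the external representation theorem, and in fact never uses comonotonic additivity of the Choquet integral; what it gives up is the Riesz-space generalization (the paper's Theorem \ref{thm:Archimedean} and Proposition \ref{prop:representationN}), which the authors present as results of independent interest and of which Theorem \ref{thm:mainN} is a corollary. Your treatment of \ref{item:a2} $\implies$ \ref{item:a3} and \ref{item:a3} $\implies$ \ref{item:a1} coincides with the paper's.
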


It is easy to see that if $\nu :\mathcal{P}(\mathbf{N})\rightarrow \mathbf{R}
$ is an \emph{abstract upper density} \cite{MR3863054, FILIPOW2019}, that
is, a normalized capacity which is also diffuse (i.e., $\nu (A)=0$ for all $%
A\in \mathrm{Fin}$) and subadditive (namely, $\nu (A\cup B)\leq \nu (A)+\nu
(B)$ for all $A,B\subseteq \mathbf{N}$), then%
\begin{equation}
\mathcal{Z}_{\nu }:=\{A\subseteq \mathbf{N}:\nu (A)=0\}  \label{eq:Znu}
\end{equation}%
is an ideal which contains $\mathrm{Fin}$ (in particular, $\mathcal{Z}=%
\mathcal{Z}_{\mathsf{d}^{\star }}$) and $\nu $ is $\mathcal{Z}_{\nu }$%
-invariant. This gives us the following easy corollary:

\begin{cor}
\label{cor:simple} If\ $\nu :\mathcal{P}(\mathbf{N})\rightarrow \mathbf{R}$
is an abstract upper density, then there exists a normalized capacity $\rho :%
\mathscr{B}(\mathrm{Ult}(\mathcal{Z}_{\nu }))\rightarrow \mathbf{R}$ such
that 
\begin{equation*}
\forall A\subseteq \mathbf{N},\quad \nu (A)=\int_{\mathrm{Ult}(\mathcal{Z}%
_{\nu })}\,\mu _{\mathcal{F}}(A)\,\mathrm{d}\rho (\mathcal{F}),
\end{equation*}%
where $\mathcal{Z}_{\nu }$ is the ideal defined in \eqref{eq:Znu}.
\end{cor}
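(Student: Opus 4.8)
The plan is to read the corollary off directly from Theorem~\ref{thm:mainN} applied to the ideal $\mathcal{I}=\mathcal{Z}_\nu$: once we know that $\mathcal{Z}_\nu$ is an ideal containing $\mathrm{Fin}$ and that $\nu$ is $\mathcal{Z}_\nu$-invariant, the implication \ref{item:a1}$\Rightarrow$\ref{item:a3} of the theorem hands us the desired $\rho$. So all the work consists in verifying the two assertions made in the paragraph preceding the corollary, and these are short consequences of diffuseness and subadditivity.

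First I would check that $\mathcal{Z}_\nu$ is an ideal containing $\mathrm{Fin}$. Since $\nu$ is monotone with $\nu(\emptyset)=0$, we have $\nu(A)\ge 0$ for every $A\subseteq\mathbf{N}$; hence if $A\subseteq B$ and $\nu(B)=0$ then $0\le\nu(A)\le\nu(B)=0$, so $\mathcal{Z}_\nu$ is closed under subsets. If $A,B\in\mathcal{Z}_\nu$ then subadditivity gives $\nu(A\cup B)\le\nu(A)+\nu(B)=0$, so $\mathcal{Z}_\nu$ is closed under finite unions. Moreover $\mathbf{N}\notin\mathcal{Z}_\nu$ because $\nu(\mathbf{N})=1\neq 0$, and $\mathrm{Fin}\subseteq\mathcal{Z}_\nu$ precisely because $\nu$ is diffuse. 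Thus $\mathcal{Z}_\nu$ is an admissible ideal for Theorem~\ref{thm:mainN}.

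Next I would verify $\mathcal{Z}_\nu$-invariance. Suppose $A\bigtriangleup B\in\mathcal{Z}_\nu$. From $A\subseteq B\cup(A\bigtriangleup B)$, monotonicity and subadditivity give $\nu(A)\le\nu(B\cup(A\bigtriangleup B))\le\nu(B)+\nu(A\bigtriangleup B)=\nu(B)$, and by symmetry $\nu(B)\le\nu(A)$, so $\nu(A)=\nu(B)$. Hence statement~\ref{item:a1} of Theorem~\ref{thm:mainN} holds with $\mathcal{I}=\mathcal{Z}_\nu$, and its equivalence with~\ref{item:a3} yields a normalized capacity $\rho:\mathscr{B}(\mathrm{Ult}(\mathcal{Z}_\nu))\to\mathbf{R}$ with $\nu(A)=\int_{\mathrm{Ult}(\mathcal{Z}_\nu)}\mu_{\mathcal{F}}(A)\,\mathrm{d}\rho(\mathcal{F})$ for all $A\subseteq\mathbf{N}$, which is exactly the assertion. (The parenthetical identity $\mathcal{Z}=\mathcal{Z}_{\mathsf{d}^\star}$ is immediate, since $\mathsf{d}^\star$ is itself an abstract upper density — its subadditivity following from subadditivity of $\limsup$ together with $|(A\cup B)\cap[1,n]|\le|A\cap[1,n]|+|B\cap[1,n]|$.)

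There is essentially no obstacle here: all the substance lies in Theorem~\ref{thm:mainN}, and the corollary is a routine specialization. The only point that repays a moment's attention is that diffuseness of $\nu$ is genuinely used — it is what guarantees $\mathrm{Fin}\subseteq\mathcal{Z}_\nu$, and hence that $\mathcal{Z}_\nu$ meets the standing assumption (imposed throughout the paper) that ideals contain $\mathrm{Fin}$; subadditivity, in turn, is what makes $\mathcal{Z}_\nu$ closed under finite unions.
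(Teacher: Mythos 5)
Your proposal is correct and follows exactly the route the paper intends: verify that $\mathcal{Z}_\nu$ is an ideal containing $\mathrm{Fin}$ and that $\nu$ is $\mathcal{Z}_\nu$-invariant (the paper leaves these as "easy to see"), then invoke the implication \ref{item:a1}$\Rightarrow$\ref{item:a3} of Theorem~\ref{thm:mainN}. Your filled-in verifications via monotonicity, diffuseness, subadditivity, and the inclusion $A\subseteq B\cup(A\bigtriangleup B)$ are all sound.
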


Abstract upper densities (and, more generally, submeasures) have been
commonly used in number theory, functional analysis, and economics. Examples
include the upper asymptotic, upper analytic, upper P\'{o}lya, and upper
Banach densities \cite{MR4054777}, exhaustive submeasures 
\cite{MR701524, MR2456888}, nonpathological and lower semicontinuous
submeasures 
\cite{MR1711328, MR1708146}, duals of exact games 
\cite{marinacci2004, MR381720}, and, of course, finitely additive and $%
\sigma $-additive measures.

A remarkable example is suggested by the well-known characterization of
analytic P-ideals $\mathcal{I}$ (i.e., analytic ideals such that if $(A_n)$
is a sequence in $\mathcal{I}$ then there exists $A \in \mathcal{I}$ such
that $A_n\setminus A \in \mathrm{Fin}$ for all $n \in \mathbf{N}$). Indeed,
a result due to Solecki states that an ideal $\mathcal{I}$ is an analytic
P-ideal if and only if there exists a lower semicontinuous submeasure $%
\varphi: \mathcal{P}(\mathbf{N}) \to [0,\infty]$ such that 
\begin{equation}  \label{eq:analyticP}
\mathcal{I}=\mathrm{Exh}(\varphi):=\{A\subseteq \mathbf{N}: \|A\|_\varphi=0\}
\end{equation}
with $\|\mathbf{N}\|_\varphi<\infty$, where $\|A\|_\varphi:=\lim_n
\varphi(A\setminus [1,n])$ represents the \textquotedblleft mass at
infinity\textquotedblright\, of each $A\subseteq \mathbf{N}$ (see, e.g., \cite[Theorem 1.2.5(b)]{MR1711328}). 
Replacing $\|\cdot\|_\varphi$ with $%
\|\cdot\|_\varphi/\|\mathbf{N}\|_\varphi$ if necessary, we can assume
without loss of generality that $\|\mathbf{N}\|_\varphi=1$. Now, it is easy 
to see that $\|\cdot\|_\varphi$ is an abstract upper density and, in
particular, it is $\mathcal{I}$-invariant. 
Therefore, thanks to Corollary \ref{cor:simple}, we obtain:

\begin{cor}
\label{cor:analyticP} If $\mathcal{I}=\mathrm{Exh}(\varphi)$ is an analytic P-ideal as in \eqref{eq:analyticP}, then there exists a normalized capacity $%
\rho: \mathscr{B}(\mathrm{Ult}(\mathcal{I}))\to \mathbf{R}$ such that 
\begin{equation*}
\forall A\subseteq \mathbf{N}, \quad \|A\|_\varphi= \int_{\mathrm{Ult}(%
\mathcal{I})}\, \mu_{\mathcal{F}}(A)\, \mathrm{d}\rho(\mathcal{F}).
\end{equation*}
\end{cor}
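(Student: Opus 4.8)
The plan is to deduce the statement directly from Corollary \ref{cor:simple}, applied to the set function $\nu:=\|\cdot\|_\varphi$. The hypothesis, via the characterization of analytic P-ideals already recalled, supplies a lower semicontinuous submeasure $\varphi:\mathcal{P}(\mathbf{N})\to[0,\infty]$ with $\mathcal{I}=\mathrm{Exh}(\varphi)$ and $\|\mathbf{N}\|_\varphi<\infty$; since replacing $\varphi$ by $\varphi/\|\mathbf{N}\|_\varphi$ leaves $\mathrm{Exh}(\varphi)$ unchanged, rescales $\|\cdot\|_\varphi$ by the same positive constant, and preserves the property of being a lower semicontinuous submeasure, I may and do assume $\|\mathbf{N}\|_\varphi=1$. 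The goal is then to check that $\nu$ is an abstract upper density whose induced zero-ideal is exactly $\mathcal{I}$.

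First I would verify that $\nu$ is well defined and $\mathbf{R}$-valued: for each $A\subseteq\mathbf{N}$ the tails $A\setminus[1,n]$ decrease with $n$, so by monotonicity of $\varphi$ the sequence $n\mapsto\varphi(A\setminus[1,n])$ is non-increasing and bounded below by $0$, hence $\|A\|_\varphi=\lim_n\varphi(A\setminus[1,n])$ exists and lies in $[0,\|\mathbf{N}\|_\varphi]=[0,1]$. Next I would check the defining properties of an abstract upper density. Monotonicity, $\nu(\emptyset)=0$, and $\nu(\mathbf{N})=1$ are immediate from the corresponding facts about $\varphi$ together with the normalization and the inclusion $A\setminus[1,n]\subseteq B\setminus[1,n]$ when $A\subseteq B$. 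Diffuseness: if $A\in\mathrm{Fin}$ then $A\setminus[1,n]=\emptyset$ for all large $n$, so $\nu(A)=0$. Subadditivity: from $(A\cup B)\setminus[1,n]=(A\setminus[1,n])\cup(B\setminus[1,n])$ and subadditivity of $\varphi$ one gets $\varphi((A\cup B)\setminus[1,n])\le\varphi(A\setminus[1,n])+\varphi(B\setminus[1,n])$, and passing to the limit in $n$ yields $\nu(A\cup B)\le\nu(A)+\nu(B)$.

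Finally, the induced ideal is $\mathcal{Z}_\nu=\{A\subseteq\mathbf{N}:\|A\|_\varphi=0\}=\mathrm{Exh}(\varphi)=\mathcal{I}$, so Corollary \ref{cor:simple} applied to $\nu$ produces a normalized capacity $\rho:\mathscr{B}(\mathrm{Ult}(\mathcal{Z}_\nu))=\mathscr{B}(\mathrm{Ult}(\mathcal{I}))\to\mathbf{R}$ with $\nu(A)=\int_{\mathrm{Ult}(\mathcal{I})}\mu_{\mathcal{F}}(A)\,\mathrm{d}\rho(\mathcal{F})$ for every $A\subseteq\mathbf{N}$, which is precisely the asserted identity since $\nu(A)=\|A\|_\varphi$. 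Essentially all the work is already in Corollary \ref{cor:simple}, so there is no serious obstacle; the only points needing (routine) attention are the interchange of the tail-limit with the submeasure inequality in the proof of subadditivity, and the remark that finiteness of $\|\mathbf{N}\|_\varphi$ is exactly what makes $\nu$ real-valued rather than $[0,\infty]$-valued, as required to invoke the corollary.
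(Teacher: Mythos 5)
Your proposal is correct and follows exactly the paper's route: normalize so that $\|\mathbf{N}\|_\varphi=1$, verify that $\|\cdot\|_\varphi$ is an abstract upper density with $\mathcal{Z}_{\|\cdot\|_\varphi}=\mathrm{Exh}(\varphi)=\mathcal{I}$, and invoke Corollary \ref{cor:simple}. You merely spell out the verification that the paper dismisses as ``easy to see,'' and your details (monotone decreasing tails, passing the submeasure inequality to the limit) are all sound.
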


On the other hand, it is also clear that every ideal $\mathcal{I}$ coincides
with $\mathcal{Z}_\nu$, where $\nu: \mathcal{P}(\mathbf{N}) \to \mathbf{R}$
is the $\{0,1\}$-valued abstract upper density such that $\nu(A)=1$ if and
only if $A\notin \mathcal{I}$. Lastly, note that Theorem \ref{thm:mainN}
holds also for normalized capacities $\nu$ which are not subadditive, e.g.,
the lower asymptotic density $\mathsf{d}_\star: \mathcal{P}(\mathbf{N}) \to 
\mathbf{R}$ defined by $\mathsf{d}_\star(A):=\liminf_n |A\cap [1,n]|/n$ for
all $A\subseteq \mathbf{N}$: in this case, the set $\{A\subseteq \mathbf{N}: 
\mathsf{d}_\star(A)=0\}$ is, of course, not closed under finite unions, however
the normalized capacity $\mathsf{d}_\star$ is $\mathcal{Z}$-invariant.

The proof of Theorem \ref{thm:mainN} is given in Section \ref{sec:mainproof}%
. An abstract version in the context of Riesz spaces will be given in
Theorem \ref{thm:Archimedean}.


\section{An abstract version on Riesz spaces}

Recall that an \emph{ordered vector space} is a real vector space $X$ with a
compatible partial order $\leq $, that is, $\alpha x\leq \alpha y$ and $%
x+z\leq y+z$ for all scalars $\alpha \geq 0$ and all vectors $x,y,z\in X$
with $x\leq y$. An ordered vector space $X$ is a \emph{Riesz space }when $X$
is also a lattice. In addition, $X$ is said to be \emph{Archimedean} if $%
0\leq nx\leq y$ for some $x,y\in X$ and all $n\in \mathbf{N}$ implies $x=0$.
Finally, a nonzero vector $e\in X$\ is called a (\emph{strong order}) \emph{%
unit} if for each $x\in X$ there exists $\lambda \geq 0$ such that $-\lambda
e\leq x\leq \lambda e$. In what follows, $X$ will be assumed to be an
Archimedean Riesz space with unit $e$.

A vector subspace $N$\ of $X$ is a \emph{Riesz subspace} if $x\vee y\in N$
whenever $x,y\in N$. A vector subspace $N$ of $X$ is said to be an \emph{%
order ideal} if it is solid, i.e., $x\in N$ whenever there exists $y\in N$
such that $|x|\leq |y|$, where $|z|:=z\vee (-z)$. Since an order ideal $N$\
contains the absolute values of its elements and $x\vee y=\frac{1}{2}\left(
x+y+|x-y|\right) $ for all $x,y\in X$, an order ideal is automatically a
Riesz subspace. We denote its positive cone by $N_{+}:=\{x\in N:x\geq 0\}$.
Finally, an\ order ideal $N$ is said to be \emph{uniformly closed} if $x\in
N $ whenever there exist a sequence of vectors $(x_{n})$ in $N$, a vector $%
y>0$, and a decreasing sequence of positive reals $(\varepsilon _{n})$ such
that $\lim_{n}\varepsilon _{n}=0$ and $|x_{n}-x|\leq \varepsilon _{n}y$ for
all $n\in \mathbf{N}$. A detailed theory of Riesz spaces can be found in 
\cite{MR2011364}.

We endow $X$\ with the norm $\Vert \cdot \Vert :X\rightarrow \left[ 0,\infty
\right) $\ defined by%
\begin{equation}
\forall x\in X,\quad \Vert x\Vert :=\inf \{\lambda \geq 0:|x|\leq \lambda
e\}.  \label{eq:normdefinition}
\end{equation}%
Note that $\Vert \cdot \Vert $ is indeed a norm of\ $X$ which is also a
Riesz norm, that is,\ $\Vert x\Vert \leq \Vert y\Vert $ whenever $|x|\leq
|y| $. In particular, $\left( X,\Vert \cdot \Vert \right) $ is an\ $M$-space
and its norm dual, discussed below,\ is an $AL$-space (see \cite[pp. 93--98]%
{MR2011364}).

\begin{rmk}
\label{rmk:infimumminimum} Since $\left( X,\Vert \cdot \Vert \right) $ is a
normed Riesz space, the positive cone $X_{+}$ is closed (see, e.g., \cite[%
Theorem 2.21]{MR2011364}). This immediately yields that $\{\lambda \geq
0:|x|\leq \lambda e\}$ is closed. By the definition of $\Vert \cdot \Vert $,
the infimum in \eqref{eq:normdefinition} is thus attained. In particular, we
have that $|x|\leq \Vert x\Vert e$ for all $x$ in $X$. Finally, since $\Vert
\cdot \Vert $ is a Riesz norm, this latter fact implies that an order ideal
is uniformly closed if and only if it is norm closed.
\end{rmk}

Let $X^{\star }$ be the norm dual of $X$. We endow\ $X^{\star }$ and any of
its subsets with the weak$^{\star }$ topology and with the canonical
ordering induced by the positive cone of $X$, i.e., 
\begin{equation*}
X_{+}^{\star }:=\{\xi \in X^{\star }:\langle x,\xi \rangle \geq 0\text{ for
all }x\in X_{+}\},
\end{equation*}%
where $\langle x,\xi \rangle $ stands for the dual pairing. Finally, we
denote the positive unit sphere by%
\begin{equation*}
\Delta :=\{\xi \in X_{+}^{\star }:\langle e,\xi \rangle =1\}.
\end{equation*}%
By the Banach-Alaoglu Theorem, $\Delta $ is compact. Moreover, the set of
extreme points of $\Delta $\ is nonempty and compact as well. 

Following \cite{MR3403064}, we say that a functional $V: X\to \mathbf{R}$ is:
\begin{enumerate}[label={\rm (\roman{*})}]
\item \label{item:Vnormalized} \emph{normalized} if $V(\lambda e)=\lambda$ for all $\lambda \in \mathbf{R}$;
\item \label{item:Vmonotone} \emph{monotone} if $V(x)\le V(y)$ for all vectors $x\le y$ in $X$;
\item \label{item:Vunitadditive} \emph{unit-additive} if $V(x+\lambda e)=V(x)+V(\lambda e)$ for all $x \in X$ and all $\lambda\ge 0$; 
\item \label{item:Vunitmodular} \emph{unit-modular} if $V(x\vee \lambda e)+V(x\wedge \lambda e)=V(x)+V(\lambda e)$ for all $x \in X$ and all $\lambda\in \mathbf{R}$.
\end{enumerate}

\begin{rmk}
\label{rmk:continuity} If a functional $V:X\rightarrow \mathbf{R}$ satisfies
properties \ref{item:Vnormalized}--\ref{item:Vunitadditive}, then $V$ is
Lipschitz continuous of order $1$: in fact,%
\begin{equation*}
V(x)=V(y+(x-y))\leq V(y+\Vert x-y\Vert e)=V(y)+\Vert x-y\Vert
\end{equation*}%
for all $x,y\in X$ so that, by symmetry, $|V(x)-V(y)|\leq \Vert x-y\Vert $.
\end{rmk}

If $N\subseteq X$ is a proper uniformly closed order ideal, we say that a
functional $V:X\rightarrow \mathbf{R}$ is
\begin{enumerate}[label={\rm (\roman{*})}]
\setcounter{enumi}{4}
\item \label{item:Vinvariant} \emph{$N$-invariant} if $V(x)=V(y)$ whenever $x-y \in N$.
\end{enumerate} 

An important class of $N$-invariant functionals are those that are also
linear. In particular, we denote by $\Delta _{N}$ the subset of functionals
in the positive unit sphere $\Delta $ that annihilate $N$ and we denote by $%
\mathcal{E}_{N}$ the set of extreme points of $\Delta _{N}$,\ that is,%
\begin{equation}
\Delta _{N}:=\Delta \cap N^{\perp }\quad \text{ and }\quad \mathcal{E}_{N}:=%
\mathrm{ext}(\Delta _{N}),  \label{eq:defiEN}
\end{equation}%
where $N^{\perp }$ is the annihilator of $N$. Note that $\Delta _{N}$ is a
nonempty, convex, and compact subset of $X^{\star }$ (cf. Proposition \ref{prop:representationN}). By the Krein-Milman Theorem, also $\mathcal{E}_{N}$ is nonempty.

In what follows, we are going to show that a functional $V:X\rightarrow 
\mathbf{R}$\ which satisfies properties \ref{item:Vnormalized}--\ref{item:Vinvariant} has to be necessarily a Choquet average of the continuous
linear functionals in $\mathcal{E}_{N}$.

\begin{thm}
\label{thm:Archimedean} Let $X$ be an Archimedean Riesz space with unit $e$
and let $N$ be a proper uniformly closed order ideal of $X$. If $%
V:X\rightarrow \mathbf{R}$ is a functional which satisfies properties \ref%
{item:Vnormalized}--\ref{item:Vinvariant}, then there exists a normalized
capacity $\nu :\mathscr{B}(\mathcal{E}_{N})\rightarrow \mathbf{R}$ such that%
\begin{equation}
\forall x\in X,\quad V(x)=\int_{\mathcal{E}_{N}}\langle x,\xi \rangle \,%
\mathrm{d}\nu (\xi ).  \label{eq:claimchoquet}
\end{equation}
\end{thm}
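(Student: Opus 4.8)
The plan is to reduce the statement to a representation on the quotient Riesz space $X/N$ and then invoke a suitable Choquet-type representation theorem for monotone, normalized, comonotonically additive functionals on an $M$-space with unit. First I would observe that properties \ref{item:Vnormalized}--\ref{item:Vunitadditive} force $V$ to be norm-continuous (Remark \ref{rmk:continuity}), and that $N$-invariance means $V$ descends to a well-defined functional $\bar V$ on the quotient $X/N$; since $N$ is a proper uniformly closed (equivalently norm-closed, by Remark \ref{rmk:infimumminimum}) order ideal, the quotient $X/N$ is again an Archimedean Riesz space with unit $\bar e$, and its quotient norm coincides with the $M$-norm induced by $\bar e$. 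Under the canonical identification, $(X/N)^\star \cong N^\perp \subseteq X^\star$, and the positive unit sphere of $(X/N)^\star$ is exactly $\Delta_N$, with extreme points $\mathcal E_N$; moreover the evaluation map sends $x \mapsto (\xi \mapsto \langle x,\xi\rangle)$ and identifies $\bar x$ with a continuous function on the compact space $\Delta_N$ (indeed on $\mathcal E_N$).

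Next I would check that $\bar V$ on $X/N$ is normalized, monotone, unit-additive and, crucially, unit-modular — these are exactly the hypotheses under which the cited functional-analytic machinery (following \cite{MR3403064}) yields a Choquet-integral representation. The key structural fact is that a functional on an Archimedean Riesz space with unit satisfying \ref{item:Vnormalized}--\ref{item:Vunitmodular} is a Choquet integral with respect to a normalized capacity on the Borel $\sigma$-algebra of the extreme points of its dual unit sphere; applying this to $\bar V$ on $X/N$ produces a normalized capacity $\nu$ on $\mathscr B(\mathcal E_N)$ with $\bar V(\bar x) = \int_{\mathcal E_N} \langle \bar x,\xi\rangle\,\mathrm d\nu(\xi)$, and pulling back along the quotient map gives \eqref{eq:claimchoquet}. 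One subtlety to address is measurability and boundedness of the integrand: each $\xi \mapsto \langle x,\xi\rangle$ is weak$^\star$-continuous hence Borel on $\mathcal E_N$, and bounded by $\|x\|$ since $|\langle x,\xi\rangle| \le \|x\|\langle e,\xi\rangle = \|x\|$ for $\xi \in \Delta$, so the Choquet integral is well-defined; one also needs that $\nu(\mathcal E_N)=1$, which follows from normalization applied to $x=e$.

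The main obstacle I anticipate is establishing that $\bar V$ inherits unit-modularity from the hypotheses on $V$ and that the reduction to $X/N$ is lattice-theoretically clean — in particular that lattice operations in $X/N$ are computed by taking representatives, i.e. $\overline{x\vee y} = \bar x \vee \bar y$, so that $V$'s unit-modular-type behavior transfers. Since $V$ is only assumed to satisfy \ref{item:Vnormalized}--\ref{item:Vinvariant}, I would expect the argument to first derive unit-modularity (or the relevant comonotone-additivity) of $V$ from these axioms, perhaps via the structure of the proof in \cite{MR3403064}; alternatively, if unit-modularity is not automatic, the representing capacity $\nu$ need not be additive, which is consistent with the statement only asking for a \emph{normalized capacity} $\nu$. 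A secondary point requiring care is that $\mathcal E_N$ is weak$^\star$-compact: $\Delta_N = \Delta \cap N^\perp$ is an intersection of the weak$^\star$-compact set $\Delta$ with a weak$^\star$-closed subspace, hence compact, and its extreme-point set is nonempty by Krein--Milman, but to integrate over $\mathcal E_N$ one wants it to be Borel in $\Delta_N$ (automatic when $\Delta_N$ is metrizable, i.e. when $X$ is separable; in general one works with the closure $\overline{\mathcal E_N}$ and checks the representation is unaffected, or cites that $\mathcal E_N$ is a Baire/Borel set in the relevant sense). Modulo these points, the representation \eqref{eq:claimchoquet} follows by transporting the abstract Choquet representation on $X/N$ back to $X$.
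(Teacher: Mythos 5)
Your proposal follows essentially the same route as the paper: pass to the quotient $X/N$ (an Archimedean Riesz space with unit $\dot e$), verify that the induced functional inherits properties \ref{item:Vnormalized}--\ref{item:Vunitmodular} via the lattice homomorphism $\pi$, identify the positive unit sphere of $(X/N)^{\star}$ with $\Delta_N$ through an affine homeomorphism, apply the representation theorem of \cite{MR3403064} on the quotient, and transport the resulting capacity to $\mathcal{E}_N$. Note only that unit-modularity is already among the hypotheses (property \ref{item:Vunitmodular} lies in the range \ref{item:Vnormalized}--\ref{item:Vinvariant}), so it need not be derived, and the measurability concern you raise is handled in the paper by first obtaining the capacity on the lattice of upper level sets of continuous functions and then extending it to $\mathscr{B}(\mathcal{E}_N)$ by an inner supremum.
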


In order to prove this result, we need four ancillary lemmas. From now on,
let $X$ and $N$ be as in the statement of Theorem \ref{thm:Archimedean}. We
denote by $\dot{X}$ the quotient space $X/N$. For each $x\in X$ denote by $%
\dot{x}:=\pi (x)$ the corresponding vector in $\dot{X}$, where $\pi
:X\rightarrow \dot{X}$ is the canonical projection. Recall that $\pi 
$ is an onto lattice homomorphism (see, e.g., \cite[Theorem 1.34]{MR2011364}%
).\ If a functional $V:X\rightarrow \mathbf{R}$ is $N$-invariant, then we
denote by $\dot{V}:\dot{X}\rightarrow \mathbf{R}$ the induced functional on
the quotient space, that is, $\dot{V}\left( \dot{x}\right) =V\left( x\right) 
$, where $\dot{x}=\pi (x)$ for some $x\in X$. Since $V$ is $N$-invariant, the
functional $\dot{V}$ is well defined. For,\ if $\dot{x}=\dot{y}$ for some $%
x,y\in X$, then $\pi (x-y)=0$, that is $x-y\in N$, and we can conclude that $%
\dot{V}(\dot{x})=V(x)=V(y)=\dot{V}(\dot{y})$.

\begin{lem}
\label{lem:Vwelldefined} $\dot{X}$ is an Archimedean Riesz space with unit $%
\dot{e}$.
\end{lem}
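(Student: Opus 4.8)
The plan is to verify the three assertions in turn: that $\dot{X}$ is a Riesz space, that $\dot{e}$ is a strong order unit, and — the only substantive point — that $\dot{X}$ is Archimedean.

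The first assertion is standard: since $N$ is an order ideal, the quotient $\dot{X}=X/N$ with its quotient ordering is a Riesz space and $\pi$ is an onto lattice homomorphism (\cite[Theorem 1.34]{MR2011364}, already invoked above). For the second, monotonicity of $\pi$ handles the order-unit inequality: given $\dot{x}=\pi(x)$, pick $\lambda\ge 0$ with $-\lambda e\le x\le\lambda e$ and apply $\pi$ to obtain $-\lambda\dot{e}\le\dot{x}\le\lambda\dot{e}$. It remains to see that $\dot{e}\ne 0$, i.e. $e\notin N$; but if $e\in N$ then, because $|x|\le\|x\|e$ for every $x\in X$ (Remark \ref{rmk:infimumminimum}) and $N$ is solid, every $x$ would lie in $N$, contradicting that $N$ is proper.

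For the Archimedean property I would bring in the hypothesis that $N$ is uniformly closed. Suppose $0\le n\dot{x}\le\dot{y}$ in $\dot{X}$ for all $n\in\mathbf{N}$. Since $\pi$ is a lattice homomorphism, replacing arbitrary representatives by their positive parts, I may choose representatives $x,y\in X_{+}$ of $\dot{x},\dot{y}$. If $\dot{y}=0$ then $0\le n\dot{x}\le\dot{y}=0$ already gives $\dot{x}=0$, so assume $\dot{y}\ne 0$; then $y\in X_{+}\setminus\{0\}$, i.e. $y>0$. From $n\dot{x}\le\dot{y}$ and $\pi\big((nx-y)^{+}\big)=(n\dot{x}-\dot{y})^{+}=0$ we get $(nx-y)^{+}\in N$ for every $n$. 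Put $z_{n}:=\tfrac{1}{n}(nx-y)^{+}=(x-\tfrac{1}{n}y)^{+}$, which lies in $N$; the identity $a-(a-b)^{+}=a\wedge b$ gives $x-z_{n}=x\wedge\tfrac{1}{n}y$, hence $0\le x-z_{n}\le\tfrac{1}{n}y$, i.e. $|z_{n}-x|\le\tfrac{1}{n}y$. Since $(z_{n})\subseteq N$, $y>0$, and $(\tfrac{1}{n})$ is a decreasing sequence of positive reals tending to $0$, uniform closedness of $N$ forces $x\in N$, that is $\dot{x}=0$. Thus $\dot{X}$ is Archimedean.

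I do not anticipate a real obstacle, only the need to identify where uniform closedness is used: the quotient of an Archimedean Riesz space by a general order ideal need not remain Archimedean, so the construction $z_{n}=(x-\tfrac1n y)^{+}$ — which pins $|z_{n}-x|$ below $\tfrac1n y$ — together with the uniform-closure hypothesis is exactly the crux of the argument.
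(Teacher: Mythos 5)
Your proof is correct, and it diverges from the paper's in one respect worth noting. For the two easier points you argue exactly as the paper does: $\dot{e}$ is an order unit because $\pi$ is a positive surjection, and $\dot{e}\neq 0$ because $e\in N$ together with solidity of $N$ and the unit property would force $N=X$. The difference is in the Archimedean property: the paper disposes of it in one line by citing \cite[Theorem 5.1]{MR217562} (the Luxemburg--Moore result that the quotient of an Archimedean Riesz space by a uniformly closed order ideal is again Archimedean), whereas you prove it from scratch. Your argument is sound: choosing positive representatives $x,y$ via $\pi(x^{+})=\dot{x}^{+}=\dot{x}$, the inequality $n\dot{x}\le\dot{y}$ gives $(nx-y)^{+}\in N$, and the elements $z_{n}=(x-\tfrac1n y)^{+}\in N$ satisfy $|z_{n}-x|=x-x\wedge\tfrac1n y\le\tfrac1n y$, so uniform closedness yields $x\in N$ (the degenerate case $\dot{y}=0$ being immediate). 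This is essentially the standard proof of the cited theorem, so the mathematical content is the same; what your version buys is self-containedness and, as you say, a transparent identification of exactly where the uniform-closure hypothesis enters, while the paper's version buys brevity at the cost of an external reference.
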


\begin{proof}
By \cite[Theorem 5.1]{MR217562}, $\dot{X}$ is an Archimedean Riesz space. We
are only left to show that $\dot{e}=\pi (e)$ is a unit of $\dot{X}$.
Consider $a\in \dot{X}$. It follows that $a=\pi (x)$ for some $x\in X$.
Since $e$ is a unit of $X$, there exists $\lambda \geq 0$\ such that $x\in
\lbrack -\lambda e,\lambda e]$. Since $\pi $ is a lattice homomorphism, $\pi 
$ is a positive linear operator, yielding that $a\in \lbrack -\lambda \dot{e}%
,\lambda \dot{e}]$. Finally,\ $\dot{e}$ is nonzero. By contradiction, assume
that $\pi (e)=\dot{e}=\dot{0}$. This would imply that $\lambda e\in N$ for
all $\lambda \geq 0$. Since$\ e$ is a unit and $N$ an order ideal, we would have
that for each $x\in X\ $there exists $\lambda \geq 0$ such that $\left\vert
x\right\vert \leq \lambda e$, yielding that $x\in N$ and, in particular, $%
X=N $, a contradiction with $N$\ being proper.
\end{proof}

\begin{lem}
\label{lem:propertiesV} 
Let $V: X\to \mathbf{R}$ be a $N$-invariant functional. 
If $V$ satisfies properties \ref{item:Vnormalized}--%
\ref{item:Vunitmodular}, so does\ $\dot{V}$.
\end{lem}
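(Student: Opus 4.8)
The plan is to transport each of the four properties from $V$ to $\dot{V}$ along the canonical projection $\pi:X\rightarrow \dot{X}$, using three facts already available: $\pi$ is an onto lattice homomorphism (hence a positive linear operator that commutes with $\vee$ and $\wedge$), $\dot{e}=\pi(e)$ is a unit of $\dot{X}$ by Lemma \ref{lem:Vwelldefined}, and $\dot{V}(\pi(x))=V(x)$ by definition of $\dot{V}$. In each case the strategy is to pick a convenient representative in $X$ of the vectors of $\dot{X}$ involved, apply the corresponding property of $V$, and project back.

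For normalization, since $\lambda\dot{e}=\pi(\lambda e)$ we get $\dot{V}(\lambda\dot{e})=V(\lambda e)=\lambda$ for every $\lambda\in\mathbf{R}$. For unit-additivity, given $a\in\dot{X}$ choose $x\in X$ with $\pi(x)=a$; then $a+\lambda\dot{e}=\pi(x+\lambda e)$, so $\dot{V}(a+\lambda\dot{e})=V(x+\lambda e)=V(x)+V(\lambda e)=\dot{V}(a)+\dot{V}(\lambda\dot{e})$ for all $\lambda\geq0$, using that $V$ is unit-additive. For unit-modularity, again choose a representative $x$ of $a$; since $\pi$ is a lattice homomorphism, $a\vee\lambda\dot{e}=\pi(x\vee\lambda e)$ and $a\wedge\lambda\dot{e}=\pi(x\wedge\lambda e)$, whence $\dot{V}(a\vee\lambda\dot{e})+\dot{V}(a\wedge\lambda\dot{e})=V(x\vee\lambda e)+V(x\wedge\lambda e)=V(x)+V(\lambda e)=\dot{V}(a)+\dot{V}(\lambda\dot{e})$ for all $\lambda\in\mathbf{R}$, by unit-modularity of $V$.

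The one step that needs a small extra argument, and is therefore the main (though mild) obstacle, is monotonicity: an inequality $a\leq b$ in $\dot{X}$ need not be witnessed by arbitrarily chosen representatives. Here I would use the standard lifting trick: given $a\leq b$, pick any $x,y_{0}\in X$ with $\pi(x)=a$ and $\pi(y_{0})=b$, and set $y:=y_{0}\vee x$. Then $\pi(y)=\pi(y_{0})\vee\pi(x)=b\vee a=b$ while $y\geq x$ in $X$, so monotonicity of $V$ gives $\dot{V}(a)=V(x)\leq V(y)=\dot{V}(b)$. Once this is in place the lemma follows, and I do not expect any of the steps to be technically delicate.
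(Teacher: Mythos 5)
Your proof is correct and follows essentially the same route as the paper: transport each property along $\pi$ using that it is an onto lattice homomorphism, with the only delicate point being the lifting of an inequality $a\leq b$ to comparable representatives. The paper handles that point by citing \cite[p.~17]{MR2011364}, whereas you supply the standard explicit argument ($y:=y_{0}\vee x$), which is exactly the proof of the cited fact.
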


\begin{proof}
By Lemma \ref{lem:Vwelldefined}, $\dot{e}$ is a unit of $\dot{X}$. By
definition of $\dot{V}$, we have that%
\begin{equation*}
\forall \lambda \in \mathbf{R},\quad \dot{V}(\lambda \dot{e})=\dot{V}%
(\lambda \pi (e))=\dot{V}(\pi (\lambda e))=V(\lambda e)=\lambda ,
\end{equation*}%
proving that $\dot{V}$ is normalized. Fix $a,b\in \dot{X}$ such that $a\leq
b $. By \cite[p.17]{MR2011364},\ there exist $x,y\in X$ such that $\pi
\left( x\right) =a$, $\pi \left( y\right) =b$, and $x\leq y$. By the
monotonicity of $V$ and the definition of $\dot{V}$, we obtain that $\dot{V}%
(a)=V(x)\leq V(y)=\dot{V}(b)$, proving that\ $\dot{V}$ is monotone. Since $%
\pi $ is linear and a lattice homomorphism, the remaining properties are
straightforward to check. Fix $a\in \dot{X}$, so that $a=\pi \left( x\right) 
$\ for some $x\in X$. It follows that for each $\lambda \geq 0$%
\begin{equation*}
\begin{split}
\dot{V}(a+\lambda \dot{e})& =\dot{V}(\pi (x+\lambda e))=V(x+\lambda e) \\
& =V(x)+V(\lambda e)=\dot{V}(\pi \left( x\right) )+\dot{V}(\pi (\lambda e))=%
\dot{V}(a)+\dot{V}(\lambda \dot{e}).
\end{split}%
\end{equation*}%
Similarly, we have that for each $\lambda \in \mathbf{R}$%
\begin{equation*}
\begin{split}
\dot{V}(a\vee \lambda \dot{e})+\dot{V}(a\wedge \lambda \dot{e})& =\dot{V}%
(\pi (x)\vee \pi (\lambda e))+\dot{V}(\pi (x)\wedge \pi (\lambda e)) \\
& =\dot{V}(\pi (x\vee \lambda e))+\dot{V}(\pi (x\wedge \lambda e)) \\
& =V(x\vee \lambda e)+V(x\wedge \lambda e) \\
& =V(x)+V(\lambda e)=\dot{V}(a)+\dot{V}(\lambda \dot{e}).
\end{split}%
\end{equation*}%
We can conclude that $\dot{V}$ is unit-additive and unit-modular.
\end{proof}

\begin{lem}
\label{lem:Deltadot} If\ $V\in \Delta $ is $N$-invariant, then%
\begin{equation}
\dot{V}\in \dot{\Delta}:=\{\dot{\xi}\in \dot{X}^{\star }:\langle \dot{x},%
\dot{\xi}\rangle \geq 0\text{ for all }\dot{x}\in \dot{X}_{+}\text{ and }%
\langle \dot{e},\dot{\xi}\rangle =1\}.  \label{eq:deltadot}
\end{equation}
\end{lem}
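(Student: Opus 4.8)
The plan is to check, one at a time, the three defining conditions of $\dot\Delta$ in \eqref{eq:deltadot}: that $\dot V$ is a continuous linear functional on $\dot X$, that it is positive, and that $\langle \dot e,\dot V\rangle = 1$. Well-definedness of $\dot V$ has already been recorded (it uses only the $N$-invariance of $V$), and linearity is inherited from $V$ and $\pi$: writing $a=\pi(x)$ and $b=\pi(y)$, one has $\dot V(\alpha a+\beta b)=\dot V(\pi(\alpha x+\beta y))=V(\alpha x+\beta y)=\alpha V(x)+\beta V(y)=\alpha\dot V(a)+\beta\dot V(b)$ for all scalars $\alpha,\beta$. The normalization is immediate as well: $\langle\dot e,\dot V\rangle=\dot V(\pi(e))=V(e)=\langle e,V\rangle=1$, since $V\in\Delta$.

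For positivity, I would fix $a\in\dot X_+$ and pick any representative $x\in X$, so that $a=\pi(x)$. Because $\pi$ is a lattice homomorphism, $a=a^{+}=\pi(x)^{+}=\pi(x^{+})$, and $x^{+}\in X_+$; hence $\dot V(a)=V(x^{+})\geq 0$, using $V\in\Delta\subseteq X^\star_+$. (Equivalently, one may lift $\dot 0\leq a$ to $0\leq x'$ with $\pi(x')=a$ via \cite[p.~17]{MR2011364}, exactly as in the proof of Lemma~\ref{lem:propertiesV}.)

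It remains to see that $\dot V$ genuinely lies in $\dot X^\star$, i.e.\ is bounded. Endow $\dot X$ with the order-unit norm $\Vert\cdot\Vert$ defined as in \eqref{eq:normdefinition} via the unit $\dot e$, which is legitimate since $\dot X$ is an Archimedean Riesz space with unit $\dot e$ by Lemma~\ref{lem:Vwelldefined}; Remark~\ref{rmk:infimumminimum}, applied to $\dot X$, then yields $|a|\leq\Vert a\Vert\,\dot e$ for every $a\in\dot X$. Combining this with the linearity and positivity of $\dot V$ established above, from $-|a|\leq a\leq|a|$ we obtain $|\dot V(a)|\leq\dot V(|a|)\leq\dot V(\Vert a\Vert\,\dot e)=\Vert a\Vert$, so $\dot V$ is bounded and hence $\dot V\in\dot X^\star$. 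Together with positivity and $\langle\dot e,\dot V\rangle=1$, this gives $\dot V\in\dot\Delta$. The argument is routine; the only mildly delicate point is the lifting of positivity from $\dot X$ back to $X$, which is handled by the lattice-homomorphism property of $\pi$ (or by the representative-lifting result already invoked for Lemma~\ref{lem:propertiesV}), while the order-unit structure on $\dot X$ that powers the continuity estimate is precisely what Lemma~\ref{lem:Vwelldefined} supplies.
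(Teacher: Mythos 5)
Your proof is correct and follows essentially the same route as the paper's: the paper simply delegates the verification of normalization, monotonicity, and unit-additivity of $\dot V$ to Lemma~\ref{lem:propertiesV} and then invokes Remark~\ref{rmk:continuity} for continuity, whereas you re-derive these facts directly (positivity via the lattice-homomorphism identity $a=\pi(x^{+})$, continuity via the order-unit bound $|a|\le\Vert a\Vert\,\dot e$ on $\dot X$). The content is the same; yours is just more self-contained.
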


\begin{proof}
Since $V\in \Delta $, it satisfies properties \ref{item:Vnormalized}--\ref%
{item:Vunitmodular}. Since $V$ is $N$-invariant, $\dot{V}$ is well defined
and clearly linear. By Lemma \ref{lem:propertiesV}, $\dot{V}$ is normalized
(thus, $\langle \dot{e},\dot{V}\rangle =\dot{V}(\dot{e})=1$), monotone
(thus, $\langle \dot{x},\dot{V}\rangle =\dot{V}(\dot{x})\geq \dot{V}(\dot{0}%
)=0$ for all $\dot{x}\in \dot{X}_{+}$), and unit-additive.\ By Remark \ref%
{rmk:continuity}, $\dot{V}$ is continuous,\ proving the statement.
\end{proof}

In what follows, the set $\dot{\Delta}$ defined in \eqref{eq:deltadot} will
be endowed with the weak$^{\star }$ topology $\sigma \left( \dot{\Delta},%
\dot{X}\right) $. Recall that $\Delta _{N}$ is the set of linear functionals 
$\xi \in \Delta $ that annihilate $N$, as defined in \eqref{eq:defiEN}.

\begin{lem}
\label{lem:affineembedding} The map $\Pi :\dot{\Delta}\rightarrow \Delta
_{N} $ defined by%
\begin{equation*}
\zeta \mapsto \zeta \circ \pi
\end{equation*}%
is an affine homeomorphism.
\end{lem}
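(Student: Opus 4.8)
The plan is to verify that $\Pi$ is well-defined, linear (hence affine), injective, surjective, and continuous with continuous inverse; compactness of the domain then upgrades ``continuous bijection'' to ``homeomorphism''.

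First I would check that $\Pi$ maps into $\Delta_N$. Given $\zeta \in \dot{\Delta}$, the composition $\zeta \circ \pi$ is linear since both $\zeta$ and $\pi$ are; it annihilates $N$ because $\pi$ does (if $x \in N$ then $\pi(x) = \dot{0}$, so $(\zeta\circ\pi)(x) = \zeta(\dot 0) = 0$); it is positive because $\pi$ is a positive operator and $\zeta$ is positive on $\dot{X}_+$; and $\langle e, \zeta\circ\pi\rangle = \zeta(\dot e) = 1$. Its continuity (i.e., membership in $X^\star$) follows since $\pi$ is norm-continuous (indeed $\|\dot x\| \le \|x\|$ in the quotient norm, which coincides with the unit norm on $\dot X$ by Lemma~\ref{lem:Vwelldefined} and Remark~\ref{rmk:infimumminimum}) and $\zeta$ is continuous; alternatively $\zeta\circ\pi$ satisfies \ref{item:Vnormalized}--\ref{item:Vunitadditive} so Remark~\ref{rmk:continuity} applies. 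Hence $\Pi$ is well-defined, and linearity of $\zeta \mapsto \zeta\circ\pi$ is immediate, so $\Pi$ is affine.

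Next, injectivity: if $\zeta_1 \circ \pi = \zeta_2 \circ \pi$, then since $\pi$ is onto, for every $\dot x \in \dot X$ we may write $\dot x = \pi(x)$ and get $\zeta_1(\dot x) = \zeta_2(\dot x)$, so $\zeta_1 = \zeta_2$. For surjectivity, take $\xi \in \Delta_N$; since $\xi$ annihilates $N$, it factors through the quotient: define $\zeta(\dot x) := \xi(x)$, which is well-defined because $x - y \in N$ implies $\xi(x) = \xi(y)$. Then $\zeta$ is linear, $\zeta(\dot e) = \xi(e) = 1$, and $\zeta$ is positive on $\dot X_+$: given $\dot x \ge \dot 0$, by \cite[p.~17]{MR2011364} (as already invoked in Lemma~\ref{lem:propertiesV}) there is a representative $x \ge 0$ of $\dot x$, whence $\zeta(\dot x) = \xi(x) \ge 0$. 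Continuity of $\zeta$ on $(\dot X, \|\cdot\|)$ follows again from Remark~\ref{rmk:continuity} applied to the functional $\dot x \mapsto \zeta(\dot x)$, which is normalized, monotone, and unit-additive by the computations above. Thus $\zeta \in \dot{\Delta}$ and $\Pi(\zeta) = \xi$.

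It remains to handle the topologies. Continuity of $\Pi$ for the weak$^\star$ topologies is routine: a subbasic neighborhood in $\Delta_N$ is determined by evaluation at some $x \in X$, and $\zeta \mapsto \langle x, \zeta\circ\pi\rangle = \langle \pi(x), \zeta\rangle$ is weak$^\star$-continuous on $\dot{\Delta}$ since $\pi(x) \in \dot X$. Now $\dot{\Delta}$ is weak$^\star$-compact (it is a weak$^\star$-closed, norm-bounded subset of $\dot X^\star$ — closedness is the usual check that the defining inequalities and the equality $\langle \dot e, \cdot\rangle = 1$ are preserved under weak$^\star$ limits, and boundedness follows from normalization plus monotonicity as in Remark~\ref{rmk:continuity}) and $\Delta_N$ is Hausdorff; a continuous bijection from a compact space to a Hausdorff space is a homeomorphism, so $\Pi^{-1}$ is automatically continuous. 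I expect the only mildly delicate point to be confirming that the quotient norm on $\dot X$ agrees with the unit norm induced by $\dot e$ so that ``continuous'' is unambiguous — but since Lemma~\ref{lem:Vwelldefined} already gives $\dot e$ as a unit and all the relevant functionals satisfy \ref{item:Vnormalized}--\ref{item:Vunitadditive}, Remark~\ref{rmk:continuity} sidesteps this entirely, and the argument goes through smoothly.
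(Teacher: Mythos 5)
Your proof is correct and follows essentially the same route as the paper's: check well-definedness and positivity of $\zeta\circ\pi$, injectivity via surjectivity of $\pi$, surjectivity by factoring $\xi\in\Delta_N$ through the quotient, and then upgrade the continuous affine bijection to a homeomorphism via compactness of $\dot{\Delta}$ and Hausdorffness of $\Delta_N$. The only cosmetic difference is that you verify the properties of the factored functional $\zeta$ by hand (normalized, monotone, unit-additive, hence continuous by Remark~\ref{rmk:continuity}), whereas the paper packages exactly this step as Lemma~\ref{lem:Deltadot}.
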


\begin{proof}
First, we prove that $\Pi $ is well defined. Fix $\zeta \in \dot{\Delta}$
and define $\xi :=\Pi (\zeta )$, that is, $\xi (x)=\zeta (\pi (x))$ for all $%
x\in X$. Since $\zeta $ and $\pi $ are positive linear operators, it follows
that $\xi $ is linear and positive, that is,\ $\xi (x)=\zeta (\pi (x))\geq
\zeta (\dot{0})=0$ for all $x\in X_{+}$ as well as $\xi (e)=\zeta (\pi
(e))=\zeta (\dot{e})=1$. By Remark \ref{rmk:continuity}, $\xi $ is also
continuous, proving that\ $\xi \in \Delta $. Since $\pi (x)=\dot{0}\ $for
all $x\in N$, we have that $\xi (x)=\zeta (\pi (x))=\zeta (\dot{0})=0$ for
all $x\in N$, yielding that $\xi \in \Delta _{N}$.

We now show that $\Pi $ is bijective. To this aim, fix $\zeta _{1},\zeta
_{2}\in \dot{\Delta}$ and suppose that $\Pi (\zeta _{1})=\Pi (\zeta _{2})$,
that is,\ $\zeta _{1}(\dot{x})=\zeta _{2}(\dot{x})$ for all $x\in X$. Since $%
\pi $ is surjective, it follows that $\zeta _{1}=\zeta _{2}$, hence $\Pi $
is injective. Next, fix $\xi \in \Delta _{N}$ and $x,y\in X$ such that $%
x-y\in N$. Since $N$ is an order ideal, it follows that $|x-y|\in N$ and $%
\xi (|x-y|)=0$. Since $\xi \in X_{+}^{\star }$, this implies that $\xi
(x)\leq \xi (y+|x-y|)=\xi (y)$ and similarly $\xi (y)\leq \xi (x)$, proving
that $\xi $ is $N$-invariant. By Lemma \ref{lem:Deltadot}, we can conclude
that $\dot{\xi}\in \dot{\Delta}$. By definition of $\Pi $ and $\dot{\xi}$,
we have that $\Pi (\dot{\xi})(x)=\dot{\xi}(\pi (x))=\xi (x)$ for all $x\in X$%
, i.e., $\Pi (\dot{\xi})=\xi $, proving that $\Pi $ is surjective.

Finally, by construction, $\Pi $ is affine and continuous. Since $\dot{\Delta%
}$ and $\Delta _{N}$ are Hausdorff and compact, this implies that $\Pi $\ is
a\ homeomorphism.
\end{proof}

We can now prove Theorem \ref{thm:Archimedean}.

\begin{proof}[Proof of Theorem \protect\ref{thm:Archimedean}]
By\ Lemma \ref{lem:Vwelldefined}, $\dot{X}$ is an Archimedean Riesz space
with unit $\dot{e}$. By\ Lemma \ref{lem:propertiesV}, $\dot{V}$ satisfies
properties \ref{item:Vnormalized}--\ref{item:Vunitmodular}. We define%
\begin{equation*}
\mathscr{U}(\mathrm{ext}(\dot{\Delta})):=\{\{\dot{\xi}\in \mathrm{ext}(\dot{%
\Delta}):f(\dot{\xi})\geq t\}:f\in C(\mathrm{ext}(\dot{\Delta})),\,t\in 
\mathbf{R}\}
\end{equation*}%
the lattice of upper level sets generating\ the Baire $\sigma $-algebra of\ $%
\mathrm{ext}(\dot{\Delta})$. By \cite[Theorem 6]{MR3403064}, there exists a
(unique outer continuous) capacity $\mu :\mathscr{U}(\mathrm{ext}(\dot{\Delta%
}))\rightarrow \mathbf{R}$ such that%
\begin{equation*}
\forall \dot{x}\in \dot{X},\quad \dot{V}(\dot{x})=\int_{\mathrm{ext}(\dot{%
\Delta})}\dot{\xi}(\dot{x})\,\mathrm{d}\mu (\dot{\xi}).
\end{equation*}%
In addition, since $\dot{V}$ is normalized, we have $\mu (\mathrm{ext}(\dot{%
\Delta}))=1$. By Lemma \ref{lem:affineembedding}, $\Pi :\dot{\Delta}%
\rightarrow \Delta _{N}$\ is an affine homeomorphism. This implies that%
\begin{equation*}
\Pi \lbrack \,\mathrm{ext}(\dot{\Delta})]=\mathrm{ext}(\Delta _{N})=:%
\mathcal{E}_{N}.
\end{equation*}%
If we define%
\begin{equation*}
\mathscr{U}(\mathcal{E}_{N}):=\{\{\xi \in \mathcal{E}_{N}:f(\xi )\geq
t\}:f\in C(\mathcal{E}_{N}),\,t\in \mathbf{R}\},
\end{equation*}
then it is routine to check that%
\begin{equation*}
\mathscr{U}(\mathrm{ext}(\dot{\Delta}))=\{\Pi ^{-1}\left( E\right) :E\in %
\mathscr{U}(\mathcal{E}_{N})\}.
\end{equation*}

\noindent Define $\nu _{0}:\mathscr{U}(\mathcal{E}_{N})\rightarrow \mathbf{R}$ by $\nu
_{0}\left( E\right) :=\mu \left( \Pi ^{-1}\left( E\right) \right) $ for all $%
E\in \mathscr{U}(\mathcal{E}_{N})$. Clearly, $\nu _{0}$ is a normalized
capacity. 
Note also that for each $x\in X$ and for
each $t\in \mathbf{R}$%
\begin{displaymath}
\begin{split}
\nu _{0}(\{\xi \in \mathcal{E}_{N}:\langle x,\xi \rangle \geq t\})&=\mu
\left( \{\dot{\xi}\in \mathrm{ext}(\dot{\Delta}):\langle x,\Pi (\dot{\xi}%
)\rangle \geq t\}\right) \\
&=\mu \left( \{\dot{\xi}\in \mathrm{ext}(\dot{\Delta}%
):\langle \dot{x},\dot{\xi}\rangle \geq t\}\right).
\end{split}
\end{displaymath}

By the definition of Choquet integral, this implies that%
\begin{equation}
\forall x\in X,\quad V(x)=\dot{V}(\dot{x})=\int_{\mathrm{ext}(\dot{\Delta})}%
\dot{\xi}(\dot{x})\,\mathrm{d}\mu (\dot{\xi})=\int_{\mathcal{E}_{N}}\xi (x)\,%
\mathrm{d}\nu _{0}(\xi ).  \label{eq:cha-var}
\end{equation}%
Since $\mathscr{U}(\mathcal{E}_{N})\subseteq \mathscr{B}(\mathcal{E}_{N})$,
the claim \eqref{eq:claimchoquet} follows by defining the normalized
capacity $\nu :\mathscr{B}(\mathcal{E}_{N})\rightarrow \mathbf{R}$ as, for
example,%
\begin{equation*}
\forall B\in \mathscr{B}(\mathcal{E}_{N}),\quad \nu (B):=\sup \{\nu
_{0}(A):A\in \mathscr{U}(\mathcal{E}_{N}),A\subseteq B\}.
\end{equation*}%
By definition of Choquet integral and since $\nu $ extends $\nu _{0}$, the
statement follows.
\end{proof}

We conclude the section with an example and a result. The example exhibits a class of proper uniformly closed order ideals. The result proves that these sets are the building blocks of any proper uniformly closed order ideal. 

\begin{rmk}
\label{rmk:exampleuniformlyclosedideals} For each $\xi \in \Delta $, the set%
\begin{equation}
N_{\xi }:=\{x\in X:\xi (|x|)=0\}  \label{eq:Nxi}
\end{equation}%
is a proper subset of $X$ (since $e\notin N_{\xi }$) with the following
properties:

\begin{enumerate}[label=(\alph*)]

\item $N_{\xi }$ is a vector subspace: if $x,y\in N_{\xi }$ and $\alpha ,\beta
\in \mathbf{R}$, then 
\begin{equation*}
0\leq \xi (|\alpha x+\beta y|)\leq |\alpha |\xi (|x|)+|\beta |\xi
(|y|)=0,\,\,\text{ hence }\alpha x+\beta y\in N_{\xi }\textup{;}
\end{equation*}

\item $N_{\xi }$ is solid (and, in particular, a Riesz subspace): if $x\in X$
and\ $y\in N_{\xi }$ are such that\ $|x|\leq |y|$, then 
\begin{equation*}
0\leq \xi (|x|)\leq \xi (|y|)=0,\,\,\text{ hence }x\in N_{\xi }\textup{;}
\end{equation*}

\item $N_{\xi }$ is uniformly closed: by Remark \ref{rmk:infimumminimum},
it\ is enough to check that $N_{\xi }$ is norm closed. Since $\Vert \cdot
\Vert $ is a Riesz norm and $||x|-|y||\leq |x-y|\ $for all $x,y\in X$, we
have that if $(x_{n})$ is a sequence in $N_{\xi }$ which converges in norm
to $x$, then $(\left\vert x_{n}\right\vert )$ converges in norm to $%
\left\vert x\right\vert $. Since $\xi $ is continuous, we can conclude that $%
\xi (|x|)=\lim_{n}\xi (|x_{n}|)=0$, that is,\ $x\in N_{\xi }$.
\end{enumerate}

Therefore $N_\xi$ is a proper uniformly closed order ideal of $X$.
\end{rmk}


\begin{prop}
\label{prop:representationN} If\ $N$ is\ a proper uniformly closed order
ideal of an Archimedean Riesz space $X$ with unit $e$, then%
\begin{equation*}
N=\bigcap\nolimits_{\xi \in \Delta _{N}}N_{\xi },
\end{equation*}%
where $\Delta _{N}$ and $N_{\xi }$ are defined as in \eqref{eq:defiEN} and %
\eqref{eq:Nxi}, respectively.
\end{prop}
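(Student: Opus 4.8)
The plan is to prove the two inclusions separately. The inclusion $N\subseteq\bigcap_{\xi\in\Delta_N}N_\xi$ is immediate: if $x\in N$ then $|x|\in N$ because $N$ is solid, and every $\xi\in\Delta_N$ annihilates $N$, so $\xi(|x|)=0$ and thus $x\in N_\xi$ for each such $\xi$. For the reverse inclusion I would argue by contraposition: given $x\notin N$, I must exhibit some $\xi\in\Delta_N$ with $\xi(|x|)>0$. The first move is to pass to the quotient $\dot X=X/N$, which by Lemma~\ref{lem:Vwelldefined} is an Archimedean Riesz space with unit $\dot e=\pi(e)$. Since $x\notin N$, the class $\dot x=\pi(x)$ is nonzero, hence $|\dot x|\neq 0$ in $\dot X$; as $\pi$ is a lattice homomorphism, $|\dot x|=\pi(|x|)$, so $\dot y:=\pi(|x|)$ is a nonzero element of $\dot X_+$. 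The task thus reduces to finding $\zeta\in\dot\Delta$ with $\zeta(\dot y)>0$ and then pulling it back.

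The heart of the matter is the separation statement: in an Archimedean Riesz space $\dot X$ with unit, every nonzero $\dot y\in\dot X_+$ fails to be killed by all states, i.e.\ $\zeta(\dot y)>0$ for some $\zeta\in\dot\Delta$. I would deduce this from standard Riesz-space functional analysis: $(\dot X,\|\cdot\|)$ is an $M$-space with unit, so by Hahn--Banach its norm dual separates points and, being the dual of a normed Riesz space, satisfies $\dot X^\star=\dot X^\star_+-\dot X^\star_+$ (see \cite{MR2011364}). Choosing $\phi\in\dot X^\star$ with $\phi(\dot y)\neq 0$ and writing $\phi=\phi_+-\phi_-$ with $\phi_\pm\geq 0$, at least one of $\phi_+(\dot y),\phi_-(\dot y)$ is strictly positive, say $\phi_+(\dot y)>0$; then $\phi_+(\dot e)>0$ since $0\le\dot y\le\|\dot y\|\dot e$ (Remark~\ref{rmk:infimumminimum}, which applies verbatim to $\dot X$), so $\zeta:=\phi_+/\phi_+(\dot e)$ lies in $\dot\Delta$ and $\zeta(\dot y)>0$. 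Equivalently, one may apply Hahn--Banach directly to the sublinear functional $p(\dot z):=\inf\{\lambda\in\mathbf R:\dot z\le\lambda\dot e\}$ (real-valued thanks to the Archimedean property and the order-unit property), extending $t\dot y\mapsto t\,p(\dot y)$ under the bound $p$ to obtain a positive, norm-continuous functional sending $\dot e$ to $1$ and $\dot y$ to $p(\dot y)>0$. I expect this separation step to be the only genuinely non-routine ingredient; everything else is an application of the lemmas already proved.

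To finish, I would invoke Lemma~\ref{lem:affineembedding}: the map $\Pi\colon\dot\Delta\to\Delta_N$ given by $\zeta\mapsto\zeta\circ\pi$ is an affine homeomorphism, so $\xi:=\zeta\circ\pi$ belongs to $\Delta_N$ and satisfies $\xi(|x|)=\zeta(\pi(|x|))=\zeta(\dot y)>0$. Hence $x\notin N_\xi$, so $x\notin\bigcap_{\xi\in\Delta_N}N_\xi$, which completes the contrapositive and therefore establishes the equality. As a byproduct the argument also re-proves that $\Delta_N\neq\emptyset$ for every proper uniformly closed order ideal $N$ (because then $\dot X\neq\{0\}$), matching the claim made after \eqref{eq:defiEN}.
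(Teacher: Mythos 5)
Your proof is correct, but it routes through the quotient space rather than separating directly in $X$ as the paper does. The paper takes $z\in X\setminus N$, applies Hahn--Banach to separate $z$ from the norm-closed subspace $N$, and then must verify by hand---via the Riesz--Kantorovich formula $\xi^{+}(x)=\sup\{\xi(y):0\leq y\leq x\}$ and the solidity of $N$---that both $\xi^{+}$ and $\xi^{-}$ annihilate $N$ before normalizing one of them to land in $\Delta_{N}$; it then closes with a short contradiction argument using $\bar{\xi}(|z|)=0\Rightarrow\bar{\xi}(z)=0$. You instead pass to $\dot{X}=X/N$ (Lemma \ref{lem:Vwelldefined}), reduce to the statement that a nonzero positive element of an Archimedean Riesz space with unit admits a state that is strictly positive on it, and pull back through the affine homeomorphism $\Pi$ of Lemma \ref{lem:affineembedding}. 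This buys two simplifications: the annihilation of $N$ is automatic (any functional factoring through $\pi$ kills $N$), and working with $|x|$ from the start avoids the paper's final $z^{+},z^{-}$ step. The cost is a dependence on Lemmas \ref{lem:Vwelldefined} and \ref{lem:affineembedding}; this is legitimate, since those lemmas precede the proposition and their proofs do not invoke it (the parenthetical ``cf.\ Proposition \ref{prop:representationN}'' after \eqref{eq:defiEN} concerns only the nonemptiness of $\Delta_{N}$, which your argument independently re-derives). Both proofs ultimately rest on the same two ingredients---Hahn--Banach plus the decomposition of a continuous functional on a normed Riesz space into positive parts---so the difference is one of organization rather than of substance, but your version is the cleaner of the two.
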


\begin{proof}
By Remark\ \ref{rmk:infimumminimum} and since $N$ is\ a proper uniformly
closed order ideal of $X$, then $N$ is norm closed and convex. 
Consider a point $z\in X\backslash N$. By the Hahn--Banach Theorem, there exists $\xi
\in X^{\star }$ such that $\xi \left( z\right) >\xi \left( x\right) $ for
all $x\in N$. We next show that $\xi \left( x\right) =0$ for all $x\in N$
and $\xi $ can be chosen to be in $\Delta $. Since $N$ is a vector space, we
have that $\xi \left( x\right) =0$ for all $x\in N$\ and, in particular, $%
\xi \left( z\right) \not=0$. Since $N$ is a Riesz subspace, we have that the
positive and negative part of each vector $x$ in $N$, that is $x^{+}$ and $%
x^{-}$, belong to $N$. By a Riesz--Kantorovich formula\ and since $X^{\star }
$ is an $AL$-space, we have that $\xi =\xi ^{+}-\xi ^{-}$, where $\xi
^{+},\xi ^{-}\in X_{+}^{\star }$\ and for each $x\geq 0$%
\begin{equation*}
\xi ^{+}\left( x\right) =\sup \left\{ \xi \left( y\right) :0\leq y\leq
x\right\} .
\end{equation*}%
Since $N$ is an order ideal, for each $x\in N$ we have that if $0\leq y\leq x$,
then $y\in N$. Since $\xi \left( N\right) =\left\{ 0\right\} $, this implies
that $\xi ^{+}\left( x\right) =0$ for all $x\in N_{+}$, proving that $\xi
^{+}\left( x^{+}\right) =\xi ^{+}\left( x^{-}\right) =0$ for all $x\in N$.
We can conclude that $\xi ^{+}\left( x\right) =\xi ^{+}\left(
x^{+}-x^{-}\right) =\xi ^{+}\left( x^{+}\right) -\xi ^{+}\left( x^{-}\right)
=0$ for all $x\in N$. Since $\xi =\xi ^{+}-\xi ^{-}$ and $\xi \left(
N\right) =\left\{ 0\right\} $, it follows that $\xi ^{-}\left( x\right) =0$ for all $x\in N$%
. Note that either $\xi ^{+}\left( z\right) \not=0$\ or $\xi ^{-}\left(
z\right) \not=0$. Otherwise, since $\xi =\xi ^{+}-\xi ^{-}$, we would have
that $0=\xi ^{+}\left( z\right) -\xi ^{-}\left( z\right) =\xi \left(
z\right) \not=0$, a contradiction. Assume $\xi ^{+}\left( z\right) \not=0$
(resp.\ $\xi ^{-}\left( z\right) \not=0$). In this case, we have that $\xi
^{+}\left( e\right) \not=0$ (resp. $\xi ^{-}\left( e\right) \not=0$).
Otherwise, since $\xi ^{+}\in X_{+}^{\star }$ (resp. $\xi ^{-}\in
X_{+}^{\star }$) and $e$ is a unit, we would have that $\xi ^{+}=0$ (resp. $%
\xi ^{-}=0$), a contradiction with $\xi ^{+}\left( z\right) \not=0$ (resp. $%
\xi ^{-}\left( z\right) \not=0$). In the first case, define\ $\bar{\xi}=\xi
^{+}/\xi ^{+}\left( e\right) $. In the second case, define $\bar{\xi}=\xi
^{-}/\xi ^{-}\left( e\right) $. Since $\xi ^{+},\xi ^{-}\in X_{+}^{\star }$,
we have that $\bar{\xi}\in \Delta $. Moreover, since $\xi ^{+}\left(
N\right) =\xi ^{-}\left( N\right) =\left\{ 0\right\} $, we can conclude that 
$\bar{\xi}\left( z\right) \not=0=\bar{\xi}\left( x\right) $ for all $x\in N$
and, in particular, $\bar{\xi}\in \Delta _{N}$.

By Remarks \ref{rmk:infimumminimum} and\ \ref%
{rmk:exampleuniformlyclosedideals} and since $N$ is an order ideal, $N_{\xi }$ is
a proper norm closed order ideal which contains $N$ for all $\xi \in \Delta
_{N}$, yielding that $M:=\bigcap\nolimits_{\xi \in \Delta _{N}}N_{\xi }$ has
the same properties. By contradiction, assume that there exists $z\in
M\backslash N$. By the initial part of the proof, there exists $\bar{\xi}\in
\Delta _{N}$ such that $\bar{\xi}\left( z\right) \not=0=\bar{\xi}\left(
x\right) $ for all $x\in N$. Since $z\in M$, we have that $z\in N_{\bar{\xi}%
} $. Since $\bar{\xi}\in \Delta \,$and $\bar{\xi}\left( \left\vert
z\right\vert \right) =0$, we have that $0\leq \bar{\xi}\left( z^{+}\right) ,%
\bar{\xi}\left( z^{-}\right) \leq \bar{\xi}\left( \left\vert z\right\vert
\right) =0$, yielding that\ $\bar{\xi}\left( z\right) =\bar{\xi}\left(
z^{+}\right) -\bar{\xi}\left( z^{-}\right) =0$, a contradiction.
\end{proof}

\section{Proof of Theorem \protect\ref{thm:mainN}}

\label{sec:mainproof}

Before we proceed with the proof of Theorem \ref{thm:mainN}, we need some
preliminary results related to ideal convergence. Given an ideal $\mathcal{I}
$ on $\mathbf{N}$, a real-valued\ sequence $x=(x_{n})$ is said to be $%
\mathcal{I}$-convergent to $\eta \in \mathbf{R}$, shortened as $\mathcal{I}%
\text{-}\lim x=\eta $,\ if $\{n\in \mathbf{N}:|x_{n}-\eta |\geq \varepsilon
\}\in \mathcal{I}$ for all $\varepsilon >0$. Accordingly, we define%
\begin{equation*}
c_{0}(\mathcal{I}):=\{x\in \mathbf{R}^{\mathbf{N}}:\mathcal{I}\text{-}\lim
x=0\},
\end{equation*}%
that is, $c_{0}(\mathcal{I})$ is the set of real-valued\ sequences which are 
$\mathcal{I}$-convergent to $0$. As usual, we denote by $\ell _{\infty }$\
the space of real-valued\ bounded sequences. Recall that $\ell _{\infty }$
is an Archimedean Riesz space with unit $e=(1,1,\ldots )$ (see, e.g., \cite[%
p. 538]{MR2378491}). Note that if $\mathcal{I}\neq \mathrm{Fin}$, then $%
c_{0}(\mathcal{I})$ is not contained in $\ell _{\infty }$ (choose, for
example, $(x_{n})$ such that $x_{n}=n$ if $n\in S$ and $x_{n}=0$ otherwise,
with $S\in \mathcal{I}\setminus \mathrm{Fin}$). The set of bounded $\mathcal{%
I}$-convergent sequences, $c_{0}(\mathcal{I})\cap \ell _{\infty }$, has been
studied in several works (see, e.g., \cite{MR2735533, MR1734462, MR3863065,  
MR3883309, MR2181783, MR3836186}).

In what follows, given a set $A\subseteq \mathbf{N}$, let $x=\bm{1}_{A}$ be
the characteristic function of $A$, so that $x_{n}=1$ if $n\in A$ and $%
x_{n}=0$ otherwise (hence, $e=\bm{1}_{\mathbf{N}}$).

\begin{lem}
\label{lem:uniformlyclosedideal} If $\mathcal{I}$ is an ideal on $\mathbf{N}$%
, then $c_{0}(\mathcal{I})\cap \ell _{\infty }$ is a proper uniformly closed
order ideal of $\ell _{\infty }$.
\end{lem}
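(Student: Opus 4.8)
The plan is to verify, one at a time, the four properties in the definition of a \emph{proper uniformly closed order ideal}: that $N:=c_0(\mathcal{I})\cap\ell_\infty$ is a vector subspace of $\ell_\infty$, that it is solid, that it is uniformly closed, and that it is proper. I do not expect any of these steps to present a genuine obstacle; everything rests on the fact that $\mathcal{I}$ is closed under subsets and finite unions (and contains $\mathrm{Fin}$), together with the reduction of ``uniformly closed'' to ``norm closed'' supplied by Remark \ref{rmk:infimumminimum}. Throughout, $\|\cdot\|$ is the supremum norm, the one induced by the unit $e=\bm{1}_{\mathbf{N}}$ via \eqref{eq:normdefinition}.

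For the subspace property, given $x,y\in N$ and scalars $\alpha,\beta$, the sequence $\alpha x+\beta y$ is clearly bounded, and for every $\varepsilon>0$ the set $\{n:|\alpha x_n+\beta y_n|\ge\varepsilon\}$ is contained in $\{n:|\alpha|\,|x_n|\ge\varepsilon/2\}\cup\{n:|\beta|\,|y_n|\ge\varepsilon/2\}$; each of these two sets lies in $\mathcal{I}$ (trivially if the corresponding scalar vanishes, since then the set is $\emptyset\in\mathcal{I}$, and otherwise because $x,y\in c_0(\mathcal{I})$), so their union, and hence the set in question, lies in $\mathcal{I}$. Thus $\alpha x+\beta y\in N$. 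Solidity is even more immediate: if $|x|\le|y|$ pointwise with $y\in N$, then $|x_n|\le\|y\|$ so $x\in\ell_\infty$, and for each $\varepsilon>0$ we have $\{n:|x_n|\ge\varepsilon\}\subseteq\{n:|y_n|\ge\varepsilon\}\in\mathcal{I}$, whence $x\in N$.

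For uniform closedness I would invoke Remark \ref{rmk:infimumminimum} to reduce the claim to norm closedness of $N$ in $(\ell_\infty,\|\cdot\|)$. So let $(x^{(k)})$ be a sequence in $N$ with $\|x^{(k)}-x\|\to 0$ for some $x\in\ell_\infty$, fix $\varepsilon>0$, and choose $k$ with $\|x^{(k)}-x\|<\varepsilon/2$; then $|x_n|\ge\varepsilon$ forces $|x^{(k)}_n|>\varepsilon/2$, so $\{n:|x_n|\ge\varepsilon\}\subseteq\{n:|x^{(k)}_n|\ge\varepsilon/2\}\in\mathcal{I}$, which gives $x\in c_0(\mathcal{I})\cap\ell_\infty=N$. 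Finally, properness holds because $\mathbf{N}\notin\mathcal{I}$: taking $\varepsilon=1/2$ shows that $\{n:|e_n|\ge 1/2\}=\mathbf{N}\notin\mathcal{I}$, so the unit $e$ is not $\mathcal{I}$-convergent to $0$ and hence $e\notin N$, forcing $N\neq\ell_\infty$. The only point that deserves a moment's attention is the passage from the abstract definition of uniform closedness to norm closedness in the third step; once Remark \ref{rmk:infimumminimum} is invoked the remainder is routine bookkeeping with the closure properties of $\mathcal{I}$.
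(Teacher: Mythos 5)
Your proposal is correct and follows the same route as the paper's proof: verify the subspace, solidity, and properness conditions directly from the closure properties of $\mathcal{I}$, and reduce uniform closedness to norm closedness via Remark \ref{rmk:infimumminimum}. The only difference is that you spell out the $\varepsilon/2$ bookkeeping that the paper dismisses as ``easy'' and ``routine,'' and your details are all accurate.
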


\begin{proof}
It is easy to see that $c_{0}(\mathcal{I})$ is a vector subspace of $\mathbf{%
R}^{\mathbf{N}}$ and so is $\ell _{\infty }$. Since $\mathcal{I}\text{-}\lim
e=1$, we obtain that $c_{0}(\mathcal{I})\cap \ell _{\infty }$ is a proper
vector subspace of $\ell _{\infty }$. Consider $y\in c_{0}(\mathcal{I})\cap
\ell _{\infty }$ and $x\in \ell _{\infty}$ such that $|x|\leq |y|$. By the
definition of $\mathcal{I}$-convergence, it follows that $\mathcal{I}$-$\lim
x=0$, proving that $c_{0}(\mathcal{I})\cap \ell _{\infty }$ is an order ideal.
Finally, it is routine to check that $c_{0}(\mathcal{I})\cap \ell _{\infty }$
is norm closed. By Remark \ref{rmk:infimumminimum}, $c_{0}(\mathcal{I})\cap \ell_{\infty }$ is uniformly closed, proving the statement.
\end{proof}

Denote by $ba$ the space of signed finitely additive measures\ $\mu :\mathcal{P}(\mathbf{N})\rightarrow \mathbf{R}$ with finite total variation,
that is, such that 
\begin{equation*}
\sup \left\{ \sum\nolimits_{i=1}^{n}|\mu (A_{i})|:\{A_{1},\ldots ,A_{n}\}%
\text{ is a partition of }\mathbf{N}\right\} <\infty
\end{equation*}%
(see, e.g., \cite[Section 10.10]{MR2378491}). Recall that $\ell _{\infty
}^{\star }$ can be identified with $ba$ (see, e.g., \cite[Theorem 14.4]%
{MR2378491}) via the lattice isomorphism $T:\ell _{\infty }^{\star
}\rightarrow ba$ defined by%
\begin{equation*}
\forall \xi \in \ell _{\infty }^{\star },\forall A\subseteq \mathbf{N},\quad
T(\xi )(A):=\xi (\bm{1}_{A}).
\end{equation*}%
We endow both $ba$ and the norm dual $\ell _{\infty }^{\star }$ with the weak%
$^{\star }$ topology. Note that $T$ is continuous and its inverse is given by%
\begin{equation}
\forall \mu \in ba,\forall x\in \ell _{\infty },\quad T^{-1}(\mu )(x)=\int_{%
\mathbf{N}}x\,\mathrm{d}\mu .  \label{eq:Tinverse}
\end{equation}

\begin{lem}
\label{lem:MI} 
If $\mathcal{I}$ is an ideal on $\mathbf{N}$ and $N=c_{0}(\mathcal{I})\cap \ell _{\infty }$, then%
$$
T\textup{[}\Delta_N\textup{]}=\mathrm{M}(\mathcal{I}),
$$
where $\mathrm{M}(\mathcal{I})$ is the set of finitely additive probability
measures\ $\mu $ such that $\mu (A)=0$ for all $A\in \mathcal{I}$.
\end{lem}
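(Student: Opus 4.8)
The plan is to establish the set equality by proving the two inclusions separately, using the lattice isomorphism $T$ between $\ell_\infty^\star$ and $ba$ recorded above. The recurring elementary fact will be that, for a subset $A\subseteq\mathbf{N}$, the characteristic function $\bm{1}_A$ lies in $N=c_0(\mathcal{I})\cap\ell_\infty$ precisely when $A\in\mathcal{I}$: indeed $\bm{1}_A$ is always bounded, and $\mathcal{I}$-$\lim\bm{1}_A=0$ amounts to $\{n:|\bm{1}_A(n)|\ge\varepsilon\}\in\mathcal{I}$ for every $\varepsilon>0$, which (taking $\varepsilon\in(0,1]$) is exactly the condition $A\in\mathcal{I}$.

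For the inclusion $T[\Delta_N]\subseteq\mathrm{M}(\mathcal{I})$, I would fix $\xi\in\Delta_N$ and put $\mu:=T(\xi)$, so that $\mu(A)=\xi(\bm{1}_A)$. Then $\mu$ is finitely additive since $T$ takes values in $ba$; it is nonnegative because $\xi$ is positive and $\bm{1}_A\ge 0$; and $\mu(\mathbf{N})=\xi(\bm{1}_\mathbf{N})=\xi(e)=1$, so $\mu$ is a finitely additive probability measure. Finally, if $A\in\mathcal{I}$ then $\bm{1}_A\in N$, whence $\mu(A)=\xi(\bm{1}_A)=0$ because $\xi$ annihilates $N$; thus $\mu\in\mathrm{M}(\mathcal{I})$.

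For the converse inclusion, I would fix $\mu\in\mathrm{M}(\mathcal{I})$ and set $\xi:=T^{-1}(\mu)$, i.e., $\xi(x)=\int_{\mathbf{N}}x\,\mathrm{d}\mu$ by \eqref{eq:Tinverse}. It is immediate that $\xi$ is a positive continuous linear functional with $\xi(e)=\mu(\mathbf{N})=1$, so $\xi\in\Delta$; the only step requiring care is that $\xi$ annihilates $N$. For this, given $x\in N$ and $\varepsilon>0$, I would set $A_\varepsilon:=\{n:|x_n|\ge\varepsilon\}\in\mathcal{I}$ and use the pointwise order bound $|x|\le\varepsilon e+\|x\|\,\bm{1}_{A_\varepsilon}$, so that positivity of $\xi$ gives $|\xi(x)|\le\xi(|x|)\le\varepsilon+\|x\|\,\mu(A_\varepsilon)=\varepsilon$, the last equality holding because $A_\varepsilon\in\mathcal{I}$ forces $\mu(A_\varepsilon)=0$; letting $\varepsilon\downarrow 0$ yields $\xi(x)=0$. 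Hence $\xi\in\Delta\cap N^\perp=\Delta_N$, and since $T$ is a bijection, $T(\xi)=\mu$, so $\mu\in T[\Delta_N]$. The main (mild) obstacle is precisely this last point — controlling the integral of a bounded $\mathcal{I}$-null sequence against a member of $\mathrm{M}(\mathcal{I})$ — and it dissolves once the ``bad set'' $A_\varepsilon$ is seen to be $\mu$-null; everything else is bookkeeping with the definitions of $T$, $\Delta$, and $N^\perp$.
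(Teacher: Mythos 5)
Your proof is correct, and its overall architecture (two inclusions, with the forward one reduced to $\bm{1}_A\in N$ for $A\in\mathcal{I}$ and the converse reduced to showing $T^{-1}(\mu)$ annihilates $N$) matches the paper's. The one genuine difference is in the key step of the converse inclusion: the paper takes $x\in N_+$, writes $\xi(x)=\int_0^\infty\mu(\{n:x_n\ge t\})\,\mathrm{d}t$ and observes that every level set lies in $\mathcal{I}$ and hence is $\mu$-null, then handles general $x\in N$ via the decomposition $x=x^+-x^-$ with $x^\pm\in N$; you instead dominate $|x|\le\varepsilon e+\Vert x\Vert\,\bm{1}_{A_\varepsilon}$ and use positivity of $\xi$ together with $\mu(A_\varepsilon)=0$ to get $|\xi(x)|\le\xi(|x|)\le\varepsilon$ for every $\varepsilon>0$. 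Your route avoids both the layer-cake representation of the integral and the positive/negative-part decomposition, trading them for the single order bound and the inequality $|\xi(x)|\le\xi(|x|)$ for positive functionals; it is, if anything, slightly more elementary, while the paper's version keeps the argument closer to the Choquet-integral formalism it reuses elsewhere. Both are complete and correct.
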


\begin{proof}
Set $X=\ell_\infty$. 
By definition of $T$, it is immediate to see that $T(\xi )$ is a finitely
additive probability measure\ for all $\xi \in \Delta $. Consider $\xi \in
\Delta _{N}=\Delta \cap N^{\perp }$. Since $\xi \in N^{\perp }$, if $A\in 
\mathcal{I}$, then $\bm{1}_{A}\in N$, yielding that $T(\xi )(A)=\xi (\bm{1}%
_{A})=0$ for all $A\in \mathcal{I}$ and proving that $T\textup{[}\Delta_N\textup{]}\subseteq \mathrm{M}(\mathcal{I})$. 

Conversely, fix $\mu \in \mathrm{M}(\mathcal{I})$. Given Equation %
\eqref{eq:Tinverse}, define $\xi :=T^{-1}(\mu )\in \ell _{\infty }^{\star }$%
. It is straightforward to check that $\xi \in \Delta $. Consider $x\in
N_{+} $. Since $\mathcal{I}$-$\lim x=0$, $x\geq 0$, and $\mu \in \mathrm{M}(%
\mathcal{I})$, we have that $\{n\in \mathbf{N}:x_{n}\geq t\}\in \mathcal{I}$
and $\mu (\{n\in \mathbf{N}:x_{n}\geq t\})=0$ for all $t>0$, proving that%
\begin{equation}
\forall x\in N_{+},\quad \xi (x)=\int_{\mathbf{N}}x\,\mathrm{d}\mu
=\int_{0}^{\infty }\mu (\{n\in \mathbf{N}:x_{n}\geq t\})\,\mathrm{d}t=0.
\label{eq:nul}
\end{equation}%
Since $N$ is a Riesz subspace, we have that the positive and negative part
of each vector $x$ in $N$, that is $x^{+}$ and $x^{-}$, belong to $N$. By %
\eqref{eq:nul}, we can conclude that $\xi (x)=\xi (x^{+})-\xi (x^{-})=0$ for
all $x\in N$, proving that $\xi \in N^{\perp }$ and, in particular, that $%
\xi \in \Delta \cap N^{\perp }=\Delta _{N}$. This implies that $T(\xi )=\mu $, proving that $\mathrm{M}(\mathcal{I}) \subseteq T\textup{[}\Delta_N\textup{]}$. 
\end{proof}

As a side result, we obtain a representation of $c_0(\mathcal{I}) \cap
\ell_\infty$.

\begin{prop}
If\ $\mathcal{I}$ is an ideal on $\mathbf{N}$, then 
\begin{equation*}
c_{0}(\mathcal{I})\cap \ell _{\infty }=\left\{ x\in \ell _{\infty }:\int_{%
\mathbf{N}}|x|\,\mathrm{d}\mu =0\text{ for all }\mu \in \mathrm{M}(\mathcal{I%
})\right\} .
\end{equation*}
\end{prop}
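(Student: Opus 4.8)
The plan is to derive this as an immediate consequence of Proposition~\ref{prop:representationN} together with Lemma~\ref{lem:MI}. Setting $X = \ell_\infty$ and $N = c_0(\mathcal{I}) \cap \ell_\infty$, Lemma~\ref{lem:uniformlyclosedideal} guarantees that $N$ is a proper uniformly closed order ideal of $X$, so Proposition~\ref{prop:representationN} applies and gives $N = \bigcap_{\xi \in \Delta_N} N_\xi$, where $N_\xi = \{x \in \ell_\infty : \xi(|x|) = 0\}$. Thus $x \in c_0(\mathcal{I}) \cap \ell_\infty$ if and only if $\xi(|x|) = 0$ for every $\xi \in \Delta_N$.

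The only remaining task is to translate the condition ``$\xi(|x|) = 0$ for all $\xi \in \Delta_N$'' into the measure-theoretic language of the statement. By Lemma~\ref{lem:MI}, the map $T$ carries $\Delta_N$ bijectively onto $\mathrm{M}(\mathcal{I})$, and by \eqref{eq:Tinverse} we have $\xi(|x|) = T^{-1}(\mu)(|x|) = \int_{\mathbf{N}} |x|\, \mathrm{d}\mu$ whenever $\mu = T(\xi)$. Hence as $\xi$ ranges over $\Delta_N$, the quantity $\xi(|x|)$ ranges over exactly the values $\int_{\mathbf{N}} |x|\, \mathrm{d}\mu$ with $\mu \in \mathrm{M}(\mathcal{I})$. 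Combining this with the displayed equivalence from the previous paragraph yields
\begin{equation*}
c_0(\mathcal{I}) \cap \ell_\infty = \left\{ x \in \ell_\infty : \int_{\mathbf{N}} |x|\, \mathrm{d}\mu = 0 \text{ for all } \mu \in \mathrm{M}(\mathcal{I}) \right\},
\end{equation*}
which is the claim. (One small point worth spelling out: $T$ being a bijection $\Delta_N \to \mathrm{M}(\mathcal{I})$ means every $\mu \in \mathrm{M}(\mathcal{I})$ arises as $T(\xi)$ for some $\xi \in \Delta_N$, so the two index sets genuinely match and no inequality is lost in either direction.)

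I do not anticipate any serious obstacle here; the proof is essentially a bookkeeping exercise that assembles already-established facts. If anything deserves care, it is simply confirming that $\int_{\mathbf{N}} |x|\, \mathrm{d}\mu$ is the correct reading of $\xi(|x|)$ under the identification $\ell_\infty^\star \cong ba$ — but this is precisely formula \eqref{eq:Tinverse}, applied to $|x| \in \ell_\infty$ rather than to $x$ itself. Everything else is a direct invocation of Proposition~\ref{prop:representationN} and Lemma~\ref{lem:MI}.
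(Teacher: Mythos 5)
Your proof is correct and follows exactly the paper's own argument: Lemma~\ref{lem:uniformlyclosedideal} to verify the hypotheses, Proposition~\ref{prop:representationN} for the representation $N=\bigcap_{\xi\in\Delta_N}N_\xi$, and Lemma~\ref{lem:MI} with \eqref{eq:Tinverse} to reindex the intersection over $\mathrm{M}(\mathcal{I})$. Nothing is missing.
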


\begin{proof}
Set $X=\ell _{\infty }$ and $N=c_{0}(\mathcal{I})\cap \ell _{\infty }$. By
Lemma \ref{lem:uniformlyclosedideal}, $N$ is a proper uniformly closed order
ideal of $X$. By Proposition \ref{prop:representationN}, Lemma \ref{lem:MI},
and Equation \eqref{eq:Tinverse}, we obtain that%
\begin{eqnarray*}
N &=&\bigcap\nolimits_{\xi \in \Delta _{N}}\{x\in X:\xi
(|x|)=0\}=\bigcap\nolimits_{\mu \in T[\Delta _{N}]}\{x\in X:T^{-1}(\mu
)(|x|)=0\} \\
&=&\bigcap\nolimits_{\mu \in \mathrm{M}(\mathcal{I})}\{x\in X:\int_{\mathbf{N%
}}|x|\,\mathrm{d}\mu =0\},
\end{eqnarray*}%
proving the statement.
\end{proof}

We are finally ready to prove Theorem \ref{thm:mainN}.

\begin{proof}[Proof of Theorem \protect\ref{thm:mainN}]
\ref{item:a1} $\implies $ \ref{item:a2} Let $X$ be the Archimedean Riesz
space $\ell _{\infty }$ with unit $e=\bm{1}_{\bm{N}}$ and $S\ $the Riesz
subspace of sequences which take\ finitely many values. Define $N:=c_{0}(%
\mathcal{I})\cap \ell _{\infty }$. By Lemma \ref{lem:uniformlyclosedideal}, $%
N$ is a proper uniformly closed order ideal. Define the functional $%
V:X\rightarrow \mathbf{R}$ by%
\begin{equation*}
\forall x\in X,\quad V(x):=\int_{\mathbf{N}}x\,\mathrm{d}\nu .
\end{equation*}%
Note that $V$ satisfies properties \ref{item:Vnormalized}--\ref%
{item:Vunitmodular} (see, e.g., \cite[Proposition 4.11, Theorem 4.3, and Lemma 4.6]{marinacci2004}).  
We next show that it is $N$-invariant.

\begin{claim}
\label{claim1} $V(x+z)=V(x)$ whenever $x \in S_+$ and $z \in N_+$.
\end{claim}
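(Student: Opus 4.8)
The plan is to reduce the claim to a level-set comparison, exploiting that $x$ takes only finitely many values. First I would write $x=\sum_{i=1}^{k}c_{i}\bm{1}_{A_{i}}$, where $0\le c_{1}<c_{2}<\cdots<c_{k}$ are the finitely many values attained by $x$ and $A_{i}:=\{n\in\mathbf{N}:x_{n}=c_{i}\}$, so that $\{A_{1},\dots,A_{k}\}$ partitions $\mathbf{N}$. Since $x,z\in\ell_{\infty}$ and $x,z\ge 0$, the sum $x+z$ is bounded and nonnegative, and for any bounded nonnegative $y$ one has $\int_{\mathbf{N}}y\,\mathrm{d}\nu=\int_{0}^{\infty}\nu(\{y\ge t\})\,\mathrm{d}t$, because the second term in the definition of the Choquet integral vanishes ($\nu(\{y\ge t\})=\nu(\mathbf{N})=1$ for every $t\le 0$). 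Thus it suffices to show $\nu(\{x+z\ge t\})=\nu(\{x\ge t\})$ for every $t>0$ and then integrate over $(0,\infty)$.

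Fixing $t>0$, I would note that $z\ge 0$ gives $\{x\ge t\}\subseteq\{x+z\ge t\}$, while a direct computation on the partition yields
$$
\{x+z\ge t\}\setminus\{x\ge t\}=\bigcup_{i\,:\,c_{i}<t}\bigl(A_{i}\cap\{n\in\mathbf{N}:z_{n}\ge t-c_{i}\}\bigr).
$$
If no value $c_{i}$ lies strictly below $t$, this set is empty and the two level sets coincide. Otherwise I would set $\delta:=t-\max\{c_{i}:c_{i}<t\}$, which is strictly positive precisely because $x$ has finite range. Then $t-c_{i}\ge\delta$ for every $i$ with $c_{i}<t$, so each $A_{i}\cap\{z\ge t-c_{i}\}$ is contained in $\{z\ge\delta\}$, and hence $\{x+z\ge t\}\bigtriangleup\{x\ge t\}\subseteq\{z\ge\delta\}$. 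Since $z\in c_{0}(\mathcal{I})$ and $\delta>0$, the set $\{z\ge\delta\}$ belongs to $\mathcal{I}$, so the symmetric difference does too, being a subset of a member of $\mathcal{I}$.

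Invoking the $\mathcal{I}$-invariance of $\nu$ then gives $\nu(\{x+z\ge t\})=\nu(\{x\ge t\})$ for all $t>0$; integrating this identity in $t$ over $(0,\infty)$ — both integrands being monotone in $t$, hence Riemann integrable, and vanishing for $t>\Vert x+z\Vert$ — produces $V(x+z)=\int_{\mathbf{N}}(x+z)\,\mathrm{d}\nu=\int_{\mathbf{N}}x\,\mathrm{d}\nu=V(x)$. I expect no genuine obstacle here: the only delicate ingredient is the extraction of the uniform gap $\delta>0$, which fails for a general nonnegative $x$ (the infimum of $t-x_{n}$ over $\{n:x_{n}<t\}$ may be $0$) and is exactly what forces the hypothesis $x\in S_{+}$; everything else is bookkeeping with the partition together with the definition of $\mathcal{I}$-invariance.
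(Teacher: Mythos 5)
Your proof is correct and follows essentially the same route as the paper's: both arguments show that for each $t>0$ the level sets $\{x+z\ge t\}$ and $\{x\ge t\}$ differ by a set of the form $\{z\ge \delta\}$ with $\delta>0$ (the paper's $\varepsilon-\max K_\varepsilon$ is exactly your $\delta$), which lies in $\mathcal{I}$ because $\mathcal{I}\text{-}\lim z=0$, and then conclude by $\mathcal{I}$-invariance of $\nu$ and the definition of the Choquet integral. Your version merely makes the partition into the sets $A_i$ and the one-sided inclusion $\{x\ge t\}\subseteq\{x+z\ge t\}$ more explicit.
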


\begin{proof}
Consider $x\in S_{+}$ and $z\in N_{+}$. Define for each $\varepsilon >0$%
\begin{equation*}
A_{\varepsilon }:=\{n\in \mathbf{N}:x_{n}+z_{n}\geq \varepsilon \}\setminus
\{n\in \mathbf{N}:x_{n}\geq \varepsilon \}.
\end{equation*}%
Fix $\varepsilon >0$. If $K_{\varepsilon }:=\{x_{n}:x_{n}<\varepsilon \}$ is
empty, then $A_{\varepsilon }=\emptyset $ and $A_{\varepsilon }\in \mathcal{I%
}$. Since $x\in S$ and $\mathcal{I}$-$\lim z=0$, if $K_{\varepsilon }$ is a
nonempty set, then $K_{\varepsilon }$ is finite\ and%
\begin{equation*}
A_{\varepsilon }=\{n\in \mathbf{N}:x_{n}\in K_{\varepsilon }\text{ and }%
z_{n}\geq \varepsilon -x_{n}\}\subseteq \{n\in \mathbf{N}:z_{n}\geq
\varepsilon -\max K_{\varepsilon }\}\in \mathcal{I},
\end{equation*}%
proving that $A_{\varepsilon }\in \mathcal{I}$. Since $x,z\geq 0$, $\{n\in 
\mathbf{N}:x_{n}\geq \varepsilon \}\setminus \{n\in \mathbf{N}%
:x_{n}+z_{n}\geq \varepsilon \}\in \mathcal{I}$ for all $\varepsilon >0$
(being empty). By\ the definition of Choquet integral and since $\nu $ is $%
\mathcal{I}$-invariant and $x,z\geq 0$, this implies that $V(x+z)=V(x)$.
\end{proof}

\begin{claim}
\label{claim2} $V(x+z)\le V(x)$ whenever $x \in S$ and $z \in N$.
\end{claim}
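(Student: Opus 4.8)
The plan is to deduce Claim \ref{claim2} from Claim \ref{claim1} via two elementary reductions: shifting $x$ by a multiple of $e$ to make it nonnegative, and splitting $z$ into its positive and negative parts.

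First I would reduce to the case $x \in S_+$. Given an arbitrary $x \in S$, the vector $x$ is bounded, so $x + \lambda e \ge 0$ for $\lambda := \|x\|$; moreover $x + \lambda e$ still takes only finitely many values, hence $x + \lambda e \in S_+$. Assuming the inequality is already known for nonnegative elements of $S$, it yields $V\big((x + \lambda e) + z\big) \le V(x + \lambda e)$. Since $\lambda \ge 0$, unit-additivity (property \ref{item:Vunitadditive}) rewrites the left-hand side as $V(x + z) + \lambda$ and the right-hand side as $V(x) + \lambda$; cancelling $\lambda$ gives $V(x + z) \le V(x)$.

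So suppose $x \in S_+$ and $z \in N$. By Lemma \ref{lem:uniformlyclosedideal}, $N = c_0(\mathcal{I}) \cap \ell_\infty$ is a uniformly closed order ideal of $\ell_\infty$, hence a Riesz subspace, so $z^+ = z \vee 0$ and $z^- = (-z) \vee 0$ both belong to $N_+$ and $z = z^+ - z^-$. Then, since $z^- \ge 0$,
\begin{equation*}
x + z = (x + z^+) - z^- \le x + z^+ .
\end{equation*}
Monotonicity of $V$ (property \ref{item:Vmonotone}) gives $V(x + z) \le V(x + z^+)$, and Claim \ref{claim1}, applied with $x \in S_+$ and $z^+ \in N_+$, gives $V(x + z^+) = V(x)$. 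Combining the two yields $V(x + z) \le V(x)$.

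I do not expect a genuine obstacle here, since the substance is already contained in Claim \ref{claim1}; the only points deserving attention are that the reduction to nonnegative $x$ really uses unit-additivity rather than mere monotonicity, and that writing $z = z^+ - z^-$ keeps both summands inside $N$, which is precisely where the order-ideal (equivalently, Riesz subspace) structure of $N$ enters.
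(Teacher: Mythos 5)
Your proof is correct and follows essentially the same route as the paper: first handle $x\in S_+$ by replacing $z$ with a nonnegative element of $N$ dominating it (you use $z^{+}$, the paper uses $|z|$ — an immaterial difference) and invoke Claim \ref{claim1} plus monotonicity, then remove the positivity assumption on $x$ by shifting with $\lambda e$ and cancelling via unit-additivity and normalization. No gaps.
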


\begin{proof}
Consider\ $x\in S_{+}$ and $z\in N$. Since $N$ is an order ideal, $|z|\in
N_{+}$. By Claim \ref{claim1} and since $V$ is monotone, it follows that $%
V(x+z)\leq V(x+|z|)=V(x)$. Next, consider $x\in S$. Since $e$ is a unit,
there exists $\lambda >0$ such that $x+\lambda e\geq 0$ and, clearly, $%
x+\lambda e\in S_{+}$. Since $V$ is unit-additive and normalized, it follows
that%
\begin{equation*}
V(x+z)+\lambda =V(x+\lambda e+z)\leq V(x+\lambda e)=V(x)+\lambda ,
\end{equation*}%
proving the claim.
\end{proof}

\begin{claim}
\label{claim3} $V$ is $N$-invariant.
\end{claim}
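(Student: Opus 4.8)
The plan is to bootstrap Claim \ref{claim2} to the full statement in two short moves: first remove the restriction $x\in S$ by a density argument, then turn the resulting inequality into an equality using that $N$ is a vector space, and finally read off $N$-invariance.

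First I would record two auxiliary facts. (a) Since $V$ satisfies properties \ref{item:Vnormalized}--\ref{item:Vunitadditive}, Remark \ref{rmk:continuity} shows that $V$ is Lipschitz continuous on $X=\ell_\infty$ with respect to $\|\cdot\|$ (which on $\ell_\infty$ is the sup norm). (b) $S$ is norm-dense in $\ell_\infty$: given $x\in\ell_\infty$ and $\varepsilon>0$, put $M:=\|x\|$, so that $|x|\le Me$ by Remark \ref{rmk:infimumminimum}; the sequence $x^\varepsilon:=\bigl(\varepsilon\lfloor x_n/\varepsilon\rfloor\bigr)_n$ ranges over finitely many values (each integer $\lfloor x_n/\varepsilon\rfloor$ lies in $[-M/\varepsilon,\,M/\varepsilon]$), hence $x^\varepsilon\in S$, and $|x-x^\varepsilon|\le\varepsilon e$, i.e.\ $\|x-x^\varepsilon\|\le\varepsilon$. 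Fixing $z\in N$ and an arbitrary $x\in\ell_\infty$, I would choose $(x_k)$ in $S$ with $\|x_k-x\|\to 0$; then $x_k+z\to x+z$ in norm, and Claim \ref{claim2} gives $V(x_k+z)\le V(x_k)$ for every $k$. Letting $k\to\infty$ and invoking the continuity of $V$, I obtain
\[
\forall x\in\ell_\infty,\ \forall z\in N,\qquad V(x+z)\le V(x).
\]

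Then I would upgrade this to an equality. Fix $x\in\ell_\infty$ and $z\in N$. The displayed inequality gives $V(x+z)\le V(x)$; applying it to the vector $x+z\in\ell_\infty$ together with $-z\in N$ (recall $N$ is a subspace) and using $(x+z)+(-z)=x$, it also gives $V(x)\le V(x+z)$. Hence $V(x+z)=V(x)$ for all $x\in\ell_\infty$ and all $z\in N$. Finally, if $x,y\in\ell_\infty$ satisfy $x-y\in N$, then writing $z:=x-y$ gives $V(x)=V(y+z)=V(y)$, which is precisely $N$-invariance and proves the claim.

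The only genuine obstacle is the passage from $S$ to all of $\ell_\infty$, and it is painless because $S$ is norm-dense and $V$ is Lipschitz; the substantive work was already carried by Claims \ref{claim1} and \ref{claim2}, so what remains is bookkeeping.
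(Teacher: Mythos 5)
Your proof is correct and follows essentially the same route as the paper: both arguments combine Claim \ref{claim2} with the norm-density of $S$ in $\ell_\infty$ and the Lipschitz continuity of $V$ from Remark \ref{rmk:continuity}, and both exploit that $z\in N$ implies $-z\in N$ to turn the one-sided inequality into an equality. The only cosmetic difference is that you first extend the inequality $V(x+z)\le V(x)$ to all $x\in\ell_\infty$ and then symmetrize, whereas the paper symmetrizes directly by approximating $x$ and $y$ with two sequences in $S$.
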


\begin{proof}
Fix $x,y\in X$ such that $x-y\in N$. Since $S$ is dense in $X$, there exist
two sequences $(x^{k})$ and $(y^{k})$ in $S$ which are norm convergent to $x$
and $y$, respectively. We obtain by Claim \ref{claim2} that%
\begin{equation*}
\forall k\in \mathbf{N},\quad V(y^{k}+x-y)\leq V(y^{k})\,\,\text{ and }%
\,\,V(x^{k}+y-x)\leq V(x^{k}).
\end{equation*}%
By Remark \ref{rmk:continuity}, $V$ is continuous. By passing to the limit,
this implies that $V(x)\leq V(y)$ and $V(y)\leq V(x)$.
\end{proof}

By Theorem \ref{thm:Archimedean} and since $V$ satisfies properties \ref%
{item:Vnormalized}--\ref{item:Vinvariant}, there exists a normalized
capacity $\psi :\mathscr{B}(\mathcal{E}_{N})\rightarrow \mathbf{R}$ such that%
\begin{equation}
\forall x\in X,\quad V(x)=\int_{\mathcal{E}_{N}}\langle x,\xi \rangle \,%
\mathrm{d}\psi (\xi ).  \label{eq:almostconclusion}
\end{equation}

\noindent Define%
\begin{equation*}
\mathrm{F}(\mathcal{I}):=\{\mu _{\mathcal{F}}:\mathcal{F}\in \mathrm{Ult}(%
\mathcal{I})\}.
\end{equation*}

\begin{claim}
\label{claim:extremeMI} $T(\mathcal{E}_N)=\mathrm{F}(\mathcal{I})$.
\end{claim}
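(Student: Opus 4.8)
We must show $T(\mathcal{E}_N)=\mathrm{F}(\mathcal{I})$, where $\mathcal{E}_N=\mathrm{ext}(\Delta_N)$ and $\mathrm{F}(\mathcal{I})=\{\mu_{\mathcal{F}}:\mathcal{F}\in\mathrm{Ult}(\mathcal{I})\}$. Since $T$ is a lattice isomorphism from $\ell_\infty^\star$ onto $ba$ that is also an affine homeomorphism for the weak$^\star$ topologies, it carries $\Delta_N$ onto $\mathrm{M}(\mathcal{I})$ by Lemma~\ref{lem:MI}, and hence carries $\mathrm{ext}(\Delta_N)$ onto $\mathrm{ext}(\mathrm{M}(\mathcal{I}))$. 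So the claim reduces to identifying the extreme points of the convex compact set $\mathrm{M}(\mathcal{I})$ of finitely additive probability measures vanishing on $\mathcal{I}$. The plan is therefore to prove
$$
\mathrm{ext}(\mathrm{M}(\mathcal{I}))=\mathrm{F}(\mathcal{I}),
$$
and the claim follows.

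**Key steps.** First, the inclusion $\mathrm{F}(\mathcal{I})\subseteq\mathrm{ext}(\mathrm{M}(\mathcal{I}))$: if $\mathcal{F}\in\mathrm{Ult}(\mathcal{I})$, then $\mu_{\mathcal{F}}\in\mathrm{M}(\mathcal{I})$ since $\mathcal{I}^\star\subseteq\mathcal{F}$ forces $A\in\mathcal{I}\Rightarrow A^c\in\mathcal{F}\Rightarrow\mu_{\mathcal{F}}(A)=0$. For extremality, suppose $\mu_{\mathcal{F}}=\tfrac12(\mu_1+\mu_2)$ with $\mu_1,\mu_2\in\mathrm{M}(\mathcal{I})$; for $A\in\mathcal{F}$ we get $1=\tfrac12(\mu_1(A)+\mu_2(A))$, forcing $\mu_1(A)=\mu_2(A)=1$, and for $A\notin\mathcal{F}$, i.e. $A^c\in\mathcal{F}$, we get $\mu_i(A)=0$; so $\mu_1=\mu_2=\mu_{\mathcal{F}}$ (this is just the standard fact that $\{0,1\}$-valued measures are extreme in $ba_+^1$, restricted to $\mathrm{M}(\mathcal{I})$). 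Second, the reverse inclusion: take $\mu\in\mathrm{ext}(\mathrm{M}(\mathcal{I}))$. I would show $\mu$ is $\{0,1\}$-valued. If not, pick $A$ with $0<\mu(A)<1$; define $\mu_1(\cdot)=\mu(\cdot\cap A)/\mu(A)$ and $\mu_2(\cdot)=\mu(\cdot\cap A^c)/\mu(A^c)$. These are finitely additive probability measures with $\mu=\mu(A)\mu_1+\mu(A^c)\mu_2$, and each $\mu_i$ still vanishes on $\mathcal{I}$ (if $B\in\mathcal{I}$ then $B\cap A\in\mathcal{I}$, so $\mu(B\cap A)=0$), hence $\mu_i\in\mathrm{M}(\mathcal{I})$; this contradicts extremality. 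So $\mu$ is $\{0,1\}$-valued; then $\mathcal{F}:=\{A:\mu(A)=1\}$ is an ultrafilter (finite additivity plus the $\{0,1\}$ range give the ultrafilter axioms), it contains $\mathcal{I}^\star$ because $A\in\mathcal{I}\Rightarrow\mu(A)=0\Rightarrow\mu(A^c)=1\Rightarrow A^c\in\mathcal{F}$, and it is free because $\mathrm{Fin}\subseteq\mathcal{I}$ makes all singletons $\mu$-null. Thus $\mu=\mu_{\mathcal{F}}$ with $\mathcal{F}\in\mathrm{Ult}(\mathcal{I})$, i.e. $\mu\in\mathrm{F}(\mathcal{I})$.

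**Main obstacle.** The arguments above are essentially routine; the only point needing care is making sure all the splitting constructions stay inside $\mathrm{M}(\mathcal{I})$ — i.e. that the conditional measures $\mu(\cdot\cap A)/\mu(A)$ inherit the vanishing-on-$\mathcal{I}$ property — which is immediate because $\mathcal{I}$ is closed under subsets. One could alternatively bypass the direct extreme-point computation and instead invoke Lemma~\ref{lem:affineembedding}: since $\Pi:\dot\Delta\to\Delta_N$ is an affine homeomorphism, $\mathcal{E}_N=\Pi[\mathrm{ext}(\dot\Delta)]$, and $\mathrm{ext}(\dot\Delta)$ consists of the lattice homomorphisms on the quotient $\dot X=\ell_\infty/N$; but since it is cleaner here to argue directly on $ba$ via $T$ and Lemma~\ref{lem:MI}, I would present the direct computation of $\mathrm{ext}(\mathrm{M}(\mathcal{I}))$ as above.
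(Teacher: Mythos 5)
Your proof is correct and follows the same route as the paper: it first uses the affine bijection $T|_{\Delta_N}\colon\Delta_N\to\mathrm{M}(\mathcal{I})$ from Lemma~\ref{lem:MI} to reduce the claim to $\mathrm{ext}(\mathrm{M}(\mathcal{I}))=\mathrm{F}(\mathcal{I})$, and then identifies the extreme points of $\mathrm{M}(\mathcal{I})$ as the $\{0,1\}$-valued measures coming from free ultrafilters containing $\mathcal{I}^{\star}$. The only difference is that the paper delegates this last identification to a citation (\textquotedblleft goes verbatim as in the case $\mathcal{I}=\{\emptyset\}$\textquotedblright), whereas you write out the standard splitting argument in full, correctly checking that the conditional measures $\mu(\cdot\cap A)/\mu(A)$ stay in $\mathrm{M}(\mathcal{I})$ and that the resulting ultrafilter is free.
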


\begin{proof}
By Lemma \ref{lem:MI}, the map from $\Delta _{N}$ to $\mathrm{M}(\mathcal{I}%
) $, defined by $\xi \mapsto T(\xi )$, is an affine bijection. This implies
that $T(\mathcal{E}_{N})=\mathrm{ext}(\mathrm{M}(\mathcal{I}))$.\ The proof
that $\mathrm{ext}(\mathrm{M}(\mathcal{I}))=\mathrm{F}(\mathcal{I})$ goes
verbatim as in the case $\mathcal{I}=\left\{ \emptyset \right\} $ (see,
e.g., \cite[p. 544]{MR2378491}).
\end{proof}

Define the capacity $\kappa :\mathscr{B}(\mathrm{F}(\mathcal{I}))\rightarrow 
\mathbf{R}$ by $\kappa (A):=\psi (T^{-1}(A))$. By the same arguments used in
proving \eqref{eq:cha-var}\ and using \eqref{eq:Tinverse}, %
\eqref{eq:almostconclusion}, and Claim \ref{claim:extremeMI}, we obtain that 
\begin{equation*}
\forall x\in X,\quad V(x)
=\int_{\mathrm{F}(\mathcal{I})}\,\left( \int_{\mathbf{N}}x\,%
\mathrm{d}\mu \right) \,\mathrm{d}\kappa (\mu ),
\end{equation*}%
Lastly, by a similar reasoning, since the map $\mathrm{Ult}(\mathcal{I})\rightarrow \mathrm{F}(%
\mathcal{I})$ defined by $\mathcal{F}\mapsto \mu _{\mathcal{F}}$ is a homeomorphism, we get
\begin{equation*}
\forall x\in X,\quad V(x)=\int_{\mathrm{Ult}(\mathcal{I})}\,\left( \int_{%
\mathbf{N}}x\,\mathrm{d}\mu _{\mathcal{F}}\right) \,\mathrm{d}\rho (\mathcal{%
F}),
\end{equation*}%
which concludes the proof.

\medskip

\ref{item:a2} $\implies$ \ref{item:a3} Choose $x=\bm{1}_A$ with $A\subseteq 
\mathbf{N}$.

\medskip

\ref{item:a3} $\implies $ \ref{item:a1} 
It follows by the fact that 
$\mu _{\mathcal{F}}$ is $\mathcal{I}$-invariant 
for all $\mathcal{F} \in \mathrm{Ult}(\mathcal{I})$. 
\end{proof}

\section*{Acknowledgments}

Simone Cerreia-Vioglio and Massimo Marinacci gratefully acknowledge the financial support of ERC (grants SDDM-TEA and INDIMACRO, respectively). Paolo Leonetti and Fabio Maccheroni acknowledge the financial support of PRIN (grant 2017CY2NCA). 

\bibliographystyle{amsplain}
\bibliography{choquet}

\providecommand{\MR}[1]{}
\providecommand{\bysame}{\leavevmode\hbox to3em{\hrulefill}\thinspace}
\providecommand{\MR}{\relax\ifhmode\unskip\space\fi MR }
\providecommand{\MRhref}[2]{%
  \href{http://www.ams.org/mathscinet-getitem?mr=#1}{#2}
}
\providecommand{\href}[2]{#2}
\begin{thebibliography}{10}

\bibitem{MR2378491}
C.~D. Aliprantis and K.~C. Border, \emph{Infinite dimensional analysis}, third
  ed., Springer, Berlin, 2006, A hitchhiker's guide. \MR{2378491}

\bibitem{MR2011364}
C.~D. Aliprantis and O.~Burkinshaw, \emph{Locally solid {R}iesz spaces with
  applications to economics}, second ed., Mathematical Surveys and Monographs,
  vol. 105, American Mathematical Society, Providence, RI, 2003. \MR{2011364}

\bibitem{MR271752}
R.~E. Atalla, \emph{On the multiplicative behavior of regular matrices}, Proc.
  Amer. Math. Soc. \textbf{26} (1970), 437--446. \MR{271752}

\bibitem{MR3405547}
A.~Bartoszewicz, P.~Das, and S.~G{\l}\c{a}b, \emph{On matrix summability of
  spliced sequences and {$A$}-density of points}, Linear Algebra Appl.
  \textbf{487} (2015), 22--42. \MR{3405547}

\bibitem{MR2735533}
A.~Bartoszewicz, S.~G\l{}ab, and A.~Wachowicz, \emph{Remarks on ideal
  boundedness, convergence and variation of sequences}, J. Math. Anal. Appl.
  \textbf{375} (2011), no.~2, 431--435. \MR{2735533}

\bibitem{MR3503706}
S.~Cerreia-Vioglio, F.~Maccheroni, and M.~Marinacci, \emph{Ergodic theorems for
  lower probabilities}, Proc. Amer. Math. Soc. \textbf{144} (2016), no.~8,
  3381--3396. \MR{3503706}

\bibitem{MR3403064}
S.~Cerreia-Vioglio, F.~Maccheroni, M.~Marinacci, and L.~Montrucchio,
  \emph{Choquet integration on {R}iesz spaces and dual comonotonicity}, Trans.
  Amer. Math. Soc. \textbf{367} (2015), no.~12, 8521--8542. \MR{3403064}

\bibitem{MR1095221}
J.~Connor, \emph{{$R$}-type summability methods, {C}auchy criteria, {$P$}-sets
  and statistical convergence}, Proc. Amer. Math. Soc. \textbf{115} (1992),
  no.~2, 319--327. \MR{1095221}

\bibitem{MR1734462}
\bysame, \emph{A topological and functional analytic approach to statistical
  convergence}, Analysis of divergence ({O}rono, {ME}, 1997), Appl. Numer.
  Harmon. Anal., Birkh\"{a}user Boston, Boston, MA, 1999, pp.~403--413.
  \MR{1734462}

\bibitem{MR1372186}
J.~Connor and J.~Kline, \emph{On statistical limit points and the consistency
  of statistical convergence}, J. Math. Anal. Appl. \textbf{197} (1996), no.~2,
  392--399. \MR{1372186}

\bibitem{MR3863054}
M.~Di~Nasso and R.~Jin, \emph{Abstract densities and ideals of sets}, Acta
  Arith. \textbf{185} (2018), no.~4, 301--313. \MR{3863054}

\bibitem{MR1711328}
I.~Farah, \emph{Analytic quotients: theory of liftings for quotients over
  analytic ideals on the integers}, Mem. Amer. Math. Soc. \textbf{148} (2000),
  no.~702, xvi+177. \MR{1711328}

\bibitem{MR3863065}
R.~Filip\'{o}w and J.~Tryba, \emph{Ideal convergence versus matrix
  summability}, Studia Math. \textbf{245} (2019), no.~2, 101--127. \MR{3863065}

\bibitem{FILIPOW2019}
\bysame, \emph{Densities for sets of natural numbers vanishing on a given
  family}, J. Number Theory \textbf{211} (2020), 371--382. \MR{4074561}

\bibitem{MR108720}
M.~Henriksen, \emph{Multiplicative summability methods and the {S}tone-\v{C}ech
  compactification}, Math. Z. \textbf{71} (1959), 427--435. \MR{108720}

\bibitem{MR2777744}
M.~Hru\v{s}\'{a}k, \emph{Combinatorics of filters and ideals}, Set theory and
  its applications, Contemp. Math., vol. 533, Amer. Math. Soc., Providence, RI,
  2011, pp.~29--69. \MR{2777744}

\bibitem{MR701524}
N.~J. Kalton and J.~W. Roberts, \emph{Uniformly exhaustive submeasures and
  nearly additive set functions}, Trans. Amer. Math. Soc. \textbf{278} (1983),
  no.~2, 803--816. \MR{701524}

\bibitem{MR3883309}
T.~Kania, \emph{A letter concerning {L}eonetti's paper `{C}ontinuous
  projections onto ideal convergent sequences'}, Results Math. \textbf{74}
  (2019), no.~1, Paper No. 12, 4. \MR{3883309}

\bibitem{MR2181783}
P.~Kostyrko, M.~Ma\v{c}aj, T.~\v{S}al\'{a}t, and M.~Sleziak,
  \emph{$\mathscr{I}$-convergence and extremal $\mathscr{I}$-limit points},
  Math. Slovaca \textbf{55} (2005), no.~4, 443--464. \MR{2181783}

\bibitem{MR3836186}
P.~Leonetti, \emph{Continuous projections onto ideal convergent sequences},
  Results Math. \textbf{73} (2018), no.~3, Paper No. 114, 5. \MR{3836186}

\bibitem{MR4054777}
P.~Leonetti and S.~Tringali, \emph{On the notions of upper and lower density},
  Proc. Edinb. Math. Soc. (2) \textbf{63} (2020), no.~1, 139--167. \MR{4054777}

\bibitem{MR217562}
W.~A.~J. Luxemburg and L.~C. Moore, Jr., \emph{Archimedean quotient {R}iesz
  spaces}, Duke Math. J. \textbf{34} (1967), 725--739. \MR{217562}

\bibitem{MR2135316}
F.~Maccheroni and M.~Marinacci, \emph{A strong law of large numbers for
  capacities}, Ann. Probab. \textbf{33} (2005), no.~3, 1171--1178. \MR{2135316}

\bibitem{marinacci2004}
M.~Marinacci and L.~Montrucchio, \emph{Introduction to the mathematics of
  ambiguity}, Uncertainty in Economic Theory, Routledge, New York, 2004.

\bibitem{MR279609}
J.~Rainwater, \emph{Regular matrices with nowhere dense support}, Proc. Amer.
  Math. Soc. \textbf{29} (1971), 361. \MR{279609}

\bibitem{MR381720}
D.~Schmeidler, \emph{Cores of exact games. {I}}, J. Math. Anal. Appl.
  \textbf{40} (1972), 214--225. \MR{381720}

\bibitem{MR835875}
\bysame, \emph{Integral representation without additivity}, Proc. Amer. Math.
  Soc. \textbf{97} (1986), no.~2, 255--261. \MR{835875}

\bibitem{MR1708146}
S.~Solecki, \emph{Analytic ideals and their applications}, Ann. Pure Appl.
  Logic \textbf{99} (1999), no.~1-3, 51--72. \MR{1708146}

\bibitem{MR2456888}
M.~Talagrand, \emph{Maharam's problem}, Ann. of Math. (2) \textbf{168} (2008),
  no.~3, 981--1009. \MR{2456888}

\bibitem{MR1145491}
P.~Walley, \emph{Statistical reasoning with imprecise probabilities},
  Monographs on Statistics and Applied Probability, vol.~42, Chapman and Hall,
  Ltd., London, 1991. \MR{1145491}

\end{thebibliography}

\end{document}